\DeclareMathAlphabet{\mathpzc}{OT1}{pzc}{m}{it}
\renewcommand{\sfdefault}{iwona}
\DeclareMathAlphabet{\mathbfsf}{\encodingdefault}{\sfdefault}{bx}{n}
\newcommand{\define}[1]{\textbf{#1}}
\newcommand{\cosheaf}[1] {{\mathsf{{#1}}}}
\newcommand{\Rspace}        	{{\mathbb R}}
\newcommand{\Ispace}        	{{\mathbb I}}
\newcommand{\RR}{\mathbb{R}}
\newcommand{\CC}{\mathbb{C}}
\newcommand{\ZZ}{\mathbb{Z}}
\newcommand{\R}{{\mathbb{R}}}
\newcommand{\Z}{{\mathbb{Z}}}
\renewcommand{\emptyset}{\varnothing}
\newcommand{\Sstrat}		{\mathpzc{S}}
\newcommand{\Tstrat}		{\mathpzc{T}}
\newcommand{\Ccat}          	{{\mathcal{C}}}
\newcommand{\Ocat}          	{{\mathcal{O}}}
\newcommand{\Ucat}          	{{\mathcal{U}}}
\newcommand{\Open}          	{{\mathsf{Open}}}
\newcommand{\Set}          	{{\mathsf{Set}}}
\newcommand{\Vect}          	{{\mathsf{Vec}}}
\newcommand{\Strat}          	{{\mathsf{Strat}}}
\newcommand{\Cov}          	{{\mathsf{Cov}}}
\newcommand{\Csh}          	{{\mathsf{Csh}}}
\newcommand{\Ent}          	{{\mathsf{Ent}}}
\newcommand{\Basic}          	{{\mathsf{Basic}}}
\newcommand{\Cat}          	{{\mathsf{Cat}}}
\newcommand{\Loc}          	{{\mathsf{Loc}}}
\newcommand{\Afunc}          	{{\mathsf{A}}}
\newcommand{\Bfunc}          	{{\mathsf{B}}}
\newcommand{\Ffunc}          	{{\mathsf{F}}}
\newcommand{\Gfunc}          	{{\mathsf{G}}}
\newcommand{\Ifunc}          	{{\mathsf{I}}}
\newcommand{\Jfunc}          	{{\mathsf{J}}}
\newcommand{\Lfunc}          	{{\mathsf{L}}}
\newcommand{\Qfunc}          	{{\mathsf{Q}}}
\newcommand{\Rfunc}          	{{\mathsf{R}}}
\newcommand{\Iffunc}          	{{\mathtt{I}}}
\newcommand{\Jffunc}          	{{\mathtt{J}}}
\newcommand{\Hom}          	{{\mathsf{Hom}}}
\newcommand{\colim}		{\mathsf{colim}\;}
\newcommand{\id}			{\mathrm{id}}
\providecommand{\leftsquigarrow}{%
  \mathrel{\mathpalette\reflect@squig\relax}%
}
\newcommand{\reflect@squig}[2]{%
  \reflectbox{$\m@th#1\rightsquigarrow$}%
}
\newcommand{\calR}{{\mathcal R}}
\newcommand{\cU}{{\mathcal U}}
\author{Justin Curry and Amit Patel}
\thanks{
The first author was supported by the National Science Foundation 
(previously NSF-RTG DMS-10-45153 and currently CCF-1850052) 
and the Air Force Office of Scientific Research (FA9550-12-1-0136).
The second author was supported by the 
National Science Foundation (DMS-1128155 and CCF-1717159).
}
\address{Department of Mathematics and Statistics, University at Albany SUNY\\
Albany, New York, USA\\ [5pt]
Department of Mathematics, Colorado State University \\
Fort Collins, Colorado, USA \\
}
\title{Classification of Constructible Cosheaves}
\keywords{Constructible cosheaves, entrance path category, Reeb graphs, Reeb spaces}
\begin{document}
\maketitle
\begin{abstract}
In this paper we prove an equivalence theorem originally observed by Robert MacPherson. 
On one side of the equivalence is the category of cosheaves that are constructible with respect to a 
locally cone-like stratification. 
Our constructibility condition is new and only requires that certain inclusions of open sets are sent to isomorphisms.
On the other side of the equivalence is the category of functors from the entrance path category, 
which has points for objects and certain homotopy classes of paths for morphisms. 
When our constructible cosheaves are valued in $\Set$ we prove an additional equivalence with 
the category of stratified coverings. 
\end{abstract}

%%%%%%%%%%%%%%%%%%%%%%%%%%%%%%%%
%%%%%%%%%%%%%%%%%%%%%%%%%%%%%%%%
\section{Introduction}
%%%%%%%%%%%%%%%%%%%%%%%%%%%%%%%%
%%%%%%%%%%%%%%%%%%%%%%%%%%%%%%%%

To begin, we consider a motivating question.
\begin{center}
\begin{quote}
 ``Given a stratified map $f:Y\to X$, how are the path components of $f^{-1}(x)$ organized, over all $x\in X$?''
 \end{quote}
\end{center}
Informally speaking, a map is stratified if it is proper and there is a partition $\Sstrat$ of $X$ into manifolds so that over each manifold $S \in \Sstrat$ the map $f^{-1}(S)\to S$ is a fiber bundle.

One perspective on this question uses the open sets of $X$ to define a functor
$\cosheaf{F}:\Open(X)\to\Set$ that assigns to each open $U$ the set $\pi_0(f^{-1}(U))$ of path components.
This functor satisfies a gluing axiom reminiscent of the usual van Kampen theorem, which makes it the prototypical example of a cosheaf of sets.
The assumption that $f$ is stratified endows $\cosheaf{F}$ with a strong property called constructibility.

Robert MacPherson observed that paths in $X$ can be used to organize $\pi_0(f^{-1}(x))$.
A path from $x$ to $x'$ lying entirely in a single stratum $S \in \Sstrat$ induces an isomorphism $\pi_0(f^{-1}(x)) \to \pi_0(f^{-1}(x'))$.
Suppose $x'$ lies in a stratum on the frontier of $S$.
MacPherson noticed that a path from $x$ to $x'$ that leaves one stratum
only by entering into a lower-dimensional stratum defines a map $\pi_0(f^{-1}(x)) \to \pi_0(f^{-1}(x'))$.
Such paths are called entrance paths and this map between components is invariant under certain homotopic perturbations.
One then thinks of this data as a functor $\Ffunc: \Ent(X, \Sstrat) \to \Set$ from the entrance path category $\Ent(X, \Sstrat)$
whose objects are points of $X$ and whose morphisms are certain homotopy classes of entrance paths.

One of the results of this paper is a proof that these two perspectives are equivalent.
This equivalence is a consequence of a more general result, which we call the Classification Theorem for Constructible Cosheaves (Theorem~\ref{thm:classification}).
This theorem proves that constructible cosheaves valued in a bi-complete category $\Omega$ are equivalent to representations of the entrance path category valued in $\Omega$.
The purpose of this paper is to provide a concise and self-contained proof of this theorem.
This requires two novel definitions:
\begin{itemize}
\item
A constructible cosheaf over a stratified space $(X, \Sstrat)$ is usually defined as a functor
$\cosheaf{F} : \Open(X) \to \Omega$ that satisfies a gluing axiom and that restricts to a locally constant cosheaf over each stratum.
Our version of the constructibility condition (Definition \ref{defn:constr-cosheaf}) simply asks that for certain pairs of open sets $U \subseteq V$,
the associated map $\cosheaf{F}(U)\to\cosheaf{F}(V)$ is an isomorphism.
Our definition of a constructible cosheaf implies the usual definition.
\item
As originally described by MacPherson, two entrance paths should be regarded as equivalent if they are related by a homotopy through entrance paths.
Following an idea of Treumann~\cite{treumann}, we declare two entrance paths to be equivalent if they are related by a finite number of elementary operations (Definition~\ref{defn:equiv-paths}), which shortcuts an entrance path by skipping past at most one stratum.
This definition is necessary for proving our van Kampen theorem (Theorem~\ref{thm:vanKampen}) for the entrance path category.
This is in turn necessary for proving the cosheaf axiom in Theorem~\ref{thm:classification}.
\end{itemize}
In addition to our main theorem (Theorem~\ref{thm:classification}),
we also define (Definition~\ref{defn:stratified-covering}) and classify stratified coverings (Theorem~\ref{thm:stratified_covering}).

We now review prior related work.
Kashiwara classified sheaves that are constructible with respect to a triangulation in his work on the Riemann-Hilbert Correspondence~\cite{kashiwara1984riemann}.
Shepard's thesis~\cite{shepard} has a similar result to Kashiwara's, but in the setting of cell complexes.
Treumann explored a 2-categorical analog of our theorem in his thesis~\cite{treumann}.
Woolf~\cite{woolf} classified constructible sheaves and cosheaves valued in $\Set$ in terms of functors from the fundamental category of a homotopically stratified space with locally 0 and 1-connected strata.
His equivalence uses an intermediary equivalence with branched covers, which prevents that proof technique from generalizing to cosheaves valued in categories other than $\Set$.
Curry's thesis~\cite{curry} provides a classification result for constructible (co)sheaves of finite-dimensional vector spaces over spaces belonging to a geometric-analytic category.
Curry and Lipsky
developed an algorithmic proof of the Van Kampen theorem in the setting of homotopies with compatible smooth triangulations, but this proof spans over five pages~\cite[\S 11.2.2]{curry}.
Finally, we note that Lurie~\cite{lurie} and Ayala/Francis/Tanaka~\cite{AFT} provide similar equivalences in the $\infty$-category setting, but it is unclear if their results can be used to deduce our own.
Regardless, our definitions of the involved structures are unique and our treatment should be understandable by anyone with a basic knowledge of point-set topology and category theory.

%%%%%%%%%%%%%%%%%%%%%%%%%%%%%%%%
%%%%%%%%%%%%%%%%%%%%%%%%%%%%%%%%
\section{Stratified Spaces}
\label{sec:stratified_spaces}
%%%%%%%%%%%%%%%%%%%%%%%%%%%%%%%%
%%%%%%%%%%%%%%%%%%%%%%%%%%%%%%%%

The core philosophy of stratification theory is to develop a good theory of spaces that are built using manifold pieces.
However, making precise how a space is ``built out of manifolds'' requires considerable work and there is no unique way of doing so.
For example, we note that in~\cite[Rmk. 4.1.9]{schurmann2012topology} Sch\"{u}rmann reviews 14 different notions of a stratified space, many of which overlap or imply the other.
Generally speaking, spaces defined using polynomial inequalities are considered to be stratified spaces, whereas the Cantor set is not.

We have chosen to work with the class of conically stratified spaces.
The definition we use is similar to the definition of CS sets used by Siebenmann~\cite{Siebenmann1972}, but we drop the assumption that the link is compact.
Our definition then agrees with Lurie's definition of a conically stratified space~\cite[Def.~A.5.5]{lurie} when the totally ordered set $\{0,1,\ldots,n\}$ is used to encode the stratification, so our definition is a special case of his.
The starting point for this definition, like so many other definitions of a stratified space, is the notion of a filtration.

\begin{defn}[Filtered Spaces and Homeomorphisms]\label{defn:filtration}
An $n$-\define{step filtration} of a topological space $X$ is a nested sequence of \emph{closed} subspaces of the form
$$\emptyset = X^{-1} \subseteq \cdots \subseteq X^n = X.$$
A \define{filtered space} is a space equipped with an $n$-step filtration for some value of $n$.
We will often refer to $X^i$ is the $i^{th}$ \define{degree} or \define{step} in a filtration of $X$.
Additionally, we say the \define{formal dimension} of a filtered space is $\sup\; \{n \mid X^n \setminus X^{n-1} \neq \varnothing\}$.

If $X$ and $Y$ are two filtered spaces, then we say that a map $f:X \to Y$ is \define{filtration-preserving} if $f(X^i)\subseteq Y^i$ for every $i$. Such a map is a \define{filtration-preserving homeomorphism} if it restricts to a homeomorphism in each degree.
\end{defn}

\begin{ex}
$X=\RR^n$ has an $n$-step filtration where
$X^i=\emptyset$ for all $i<n$ and $X^n=\RR^n$.
For this filtration the formal dimension and the Hausdorff dimension of $\R^n$ agree.
Unless otherwise stated, this is the assumed filtration of $\RR^n$.
\end{ex}

\begin{defn}[Product of Filtered Spaces]\label{defn:product-filtration}
If $X$ has an $n$-step filtration and $Y$ has an $m$-step filtration,
then the \define{product filtration} of $X\times Y$ is an $(n+m)$-step filtration where
$(X\times Y)^k = \bigcup_{i+j=k} X^i \times Y^j.$
\end{defn}

The next ingredient in the definition of a conically stratified space is, unsurprisingly, the notion of a cone.
The following definition of the cone is non-standard, but we find that it offers certain advantages over the usual definition.
The authors learned of this definition from Lurie~\cite[Def.~A.5.3]{lurie}.

\begin{defn}[Cone]\label{defn:cone}
Given a topological space $L$, the \define{cone} on $L$ is the topological space
\[
C(L)= \big( L\times \RR_{>0} \big) \cup \{\star\},
\]
where $U \subseteq C(L)$ is open if and only if $U \cap \big( L\times \RR_{>0} \big)$ is open.
We further require that if $\star \in U$, then there is an $\epsilon>0$ such that $L\times (0,\epsilon)\subseteq U$.
\end{defn}

As the reader is probably aware, the usual definition of the cone starts with $L\times \RR_{\geq 0}$ and then collapses the subspace $L\times \{0\}$ to a point.
We compare and contrast these two definitions in the following remarks.

\begin{rmk}[Agreement when $L$ is compact]
Let $\sim$ be the equivalence relation on $L\times\R_{\geq 0}$ that identifies two distinct points $x \sim x'$ if $x,x'\in L\times\{0\}$, otherwise no two distinct points are identified.
Let $C_1(L)$ be the quotient space defined by this equivalence relation.
Let $C_2(L)$ be the cone on $L$ given by Definition~\ref{defn:cone}.
If we identify the equivalence class containing $L\times\{0\}$ with $\{\star\}$, then $C_1(L)$ and $C_2(L)$ are identical set-theoretically, but they have slightly different topologies as we will now see.

Notice there is a map $q: L\times \R_{\geq 0} \to C_1(L)$ that sends $L\times \{0\}$ to the distinguished cone point $\star$.
Let $V\subseteq C_1(L)$ be an open set in the traditional cone that contains the cone point.
This is open if and only if $q^{-1}(V)$ is open in $L\times \R_{\geq 0}$.
We know that $q^{-1}(V)$ contains a union of open sets of the form $W_i\times [0,\epsilon_i)$ for $\{W_i\}$ forming a cover of $L$.
If $L$ is compact then we can select a finite subcover of the $\{W_i\}$ and set $\epsilon$ to be the minimum of the $\epsilon_i$ used in the finite subcover.
This shows that when $L$ is compact the two notions of a cone agree.
However, if $L=\R$, for example, it is possible to have an open set in $C_1(L)$ that is not open in $C_2(L)$.
This implies that the identity map $C_1(L) \to C_2(L)$ is continuous, but not always a homeomorphism.
\end{rmk}

\begin{rmk}[Cone of the Empty Set]
Recall that the product of any space with the empty space is empty.
One of the advantages that Definition~\ref{defn:cone} offers is that the cone is defined via a \emph{union} with the one-point space $\{\star\}$, so even when $L=\emptyset$, we have that $C(L)=\{\star\}$.
In the usual definition described above, one has to introduce a special case that specifies that the cone of the empty set is the one-point set.
\end{rmk}

\begin{rmk}[Filtration of the Cone]
Let $L$ be a space with an $n$-step filtration.
Now consider $\RR_{>0}$ with the 1-step filtration, which is empty in degrees $-1$ and $0$, but $\RR_{>0}$ in degree 1.
The product filtration of $L\times \RR_{>0}$ is then simply $(L\times \RR_{>0})^{i+1}=L^i\times \RR_{>0}$.
Since $L^0$ is promoted to degree one in $L\times \RR_{>0}$, we simply place the cone point $\{\star\}$ in degree zero, which yields the $(n+1)$-step filtration of $C(L)$ where
\[
	C(L)^0 = \{\star\} \qquad \text{and} \qquad C(L)^{i+1} = \big( L^i \times \RR_{>0} \big) \cup \{\star\}.
\]
\end{rmk}

We now have all the ingredients necessary for defining a conically stratified space.

\begin{defn}[Conically Stratified Spaces]\label{defn:stratified-space}
An \define{$n$-dimensional conically stratified space} is a Hausdorff space $X$ equipped with an $n$-step filtration
\begin{equation*}
	\emptyset = X^{-1} \subseteq \cdots \subseteq X^n = X
\end{equation*}
where for each integer $d\geq 0$ and each point $x \in X^d - X^{d-1}$
there exists a distinguished open set $U_x \subseteq X$ containing $x$,
a filtered space $L_x$ of formal dimension $(n-d-1)$,
and a filtration-preserving homeomorphism
$$h_x:\Rspace^d \times C(L_x)\to U_x$$
taking $(0,\star)$ to $x$.
We call the space $L_x$ the \define{link} about $x$ and the open set $U_x$ a \define{basic open}.
A connected component of $X^d - X^{d-1}$ for any $d\geq 0$ is called a \define{d-dimensional stratum}.
We will denote the set of all strata by $\Sstrat$ and will refer to $X$ along with its stratification using the pair $(X,\Sstrat)$.
When there is no risk of confusion, we will call a conically stratified space simply a \define{stratified space}.
\end{defn}

\begin{rmk}
The class of conically stratified spaces includes the class of \define{CS sets} in the sense of Siebenmann~\cite{Siebenmann1972}, but it includes more spaces since we no longer assume the link is compact.
CS sets in turn include \define{topological stratified spaces}, as introduced by Goresky and MacPherson~\cite{GorMac83}.
Indeed, following the presentation of Friedman~\cite[Def. 2.3.10]{friedman}, topological stratified spaces can be viewed as \emph{recursive} CS sets, because the link $L_x$ is assumed to be a CS set as well.
\end{rmk}

\begin{ex}[A Non-Example]
Since the link of a point in a conically stratified space is only assumed to be filtered, one may wonder what happens when we let the link of a point be something pathological, like the Cantor set.
Let $L$ be the Cantor set, filtered as $L^{-1}=\varnothing$ and $L^0=L$.
Let $X=C(L)$ be the cone on the Cantor set.
At the cone point $\star$, $X$ has a conical neighborhood required by Definition~\ref{defn:stratified-space}.
However, for any point $x\neq \star$, there does \emph{not} exist a neighborhood $U_x$ homeomorphic to $\R$.
To see why, consider the projection $\pi(x)$ of $x$ to the link. Since every point of the Cantor set is a limit point any open set containing $\pi(x)$ will contain infinitely many other points in the Cantor set.
This proves the point $x$ in the product topology $L\times\R_{>0}$ will not have a neighborhood homeomorphic to $\R$.
\end{ex}

\begin{rmk}[Basic Opens]\label{rmk:basic-opens}
We call the distinguished open sets $U_x$ described in Definition~\ref{defn:stratified-space} \emph{basic} opens because they form a basis for the topology on $X$.
To verify this, note that for every $r>0$ and $s>0$, we can restrict the homeomorphism $h_x:\Rspace^d \times C(L_x)\to U_x$ to
those points $(v,\ell,t)\in \R^d \times C(L_x)$ with $||v||<r$ and $t < s$ to obtain another distinguished open $U_x(r,s)\subset X$.
This implies that the collection of distinguished opens, written $\Basic(X,\Sstrat)$, forms a basis for the topology on $X$.
Note that we are using the fact that points in the cone on the link $C(L_x)$ are equivalently indexed by points $(\ell,t)\in L_x \times \RR_{\geq 0}$.
\end{rmk}

\begin{defn}
Given a basic open $U_x \in \Basic(X,\Sstrat)$, its \define{associated stratum} is the stratum $S$ containing $x$.
We note that $S\in \Sstrat$ is the unique lowest dimensional stratum intersecting $U_x$.
We also call $U_x$ an \define{$S$-basic open} when it is convenient to do so.
We denote the collection of $S$-basic opens by $\Basic(X,S)$.
Clearly
\[
	\Basic(X,\Sstrat)= \bigcup_{S\in \Sstrat} \Basic(X,S).
\]
\end{defn}

In the following remark, we note how a basic open $U_x$ can also be regarded as a basic open for all nearby points $x'$ in the same associated stratum as $x$.
This is sometimes referred to as \emph{local triviality} of a stratification.
It further implies that the associated stratum to a basic open is the lowest dimensional stratum it intersects.

\begin{rmk}\label{rmk:change-of-cone-point}
Given a statum $S\subseteq X^d - X^{d-1}$, a point $x\in S$ and a basic open $U_x$ about $x$,
we can regard $U_x$ as a basic open neighborhood of every point $x' \in S \cap U_x \cong \RR^d$.
Indeed, for each such $x'$ the filtration-preserving homeomorphism $h_x: \R^d \times C(L_x) \to U_x$ carries some point $(v',\star)\in \R^d \times C(L_x)$ to $x'$.
By pre-composing $h_x$ with the map $A_{v'}\times \id$ where $A_{v'}(v)=v+v'$, one obtains a filtration-preserving homeomorphism $h_{x'}$ sending $(0,\star)$ to $x'$, thereby making $U_x$ a basic open for $x'$.
Consequently, we can drop the subscript and write $U \in \Basic(X, \Sstrat)$ when it is convenient.
\end{rmk}

One of the implications of Definition~\ref{defn:stratified-space} is that a basic open restricts to a coordinate chart for its associated stratum.

\begin{rmk}[Manifold Strata]\label{rmk:mfld-strata}
We say that a component of $X^d - X^{d-1}$ is a $d$-dimensional stratum because it is, in fact, a $d$-dimensional manifold.
To see this, consider a point $x\in X^d - X^{d-1}$ and a basic open $U_x$ containing $x$.
The filtration of
\[
\RR^d \times C(L_x) = \big( \RR^d \times L_x \times \RR_{>0} \big) \cup \big( \RR^d \times \{\star\} \big),
\]
combined with the assumption that $h_x$ is a filtered homeomorphism implies that that we have a homeomorphism
$$\RR^d \times \{\star\} \to U_x \cap X_d .$$
\end{rmk}

\begin{rmk}[Poset of Strata]
One of the consequences of the definition of a conically stratified space $(X,\Sstrat)$ is that the set of strata $\Sstrat$ satisfies the axiom of the frontier.
The axiom of the frontier says that if $S$ and $S'$ are two strata and if $S\cap \overline{S'}\neq \varnothing$, then $S\subseteq \overline{S'}$.
Lemma 2.3.7 of~\cite{friedman} proves that CS sets satsify the axiom of the frontier, but that proof goes through without the assumption that the link is compact.
Following Proposition 2.2.20 of~\cite{friedman}, we can define a partial order on $\Sstrat$ where $S \leq S'$ if and only if $S$ is contained in the closure of $S'$.
\end{rmk}

We conclude this section by noting how the collection of stratified spaces form a category.

\begin{defn}
A \define{stratum-preserving map} $f : (Y, \Tstrat) \to (X, \Sstrat)$ is a continuous map $f : Y \to X$ such that
for every $T\in \Tstrat$, there exists an $S \in \Sstrat$ where $f(T) \subseteq S$.
\end{defn}

\begin{ex}
Note that any open subset $U \subseteq X$ of a conically stratified space is also a conically stratified space.
If we write $(U, \Sstrat_U)$ for the the restriction of
$(X, \Sstrat)$ to $U$, then we have that the inclusion $\iota: (U, \Sstrat_U) \to (X, \Sstrat)$ is a stratum preserving map.
\end{ex}

\begin{defn}
Let \define{$\mathsf{Strat}$} be the category of conically stratified spaces and stratum-preserving maps.
\end{defn}

%%%%%%%%%%%%%%%%%%%%%%%%%%%%%%%%%%%%%%
\section{Constructible Cosheaves}
\label{sec:open_sets}
%%%%%%%%%%%%%%%%%%%%%%%%%%%%%%%%%%%%%%

In this section, we introduce the notion of a cosheaf and a {\em constructible} cosheaf.
The definition of a cosheaf by open sets only requires the structure of a topological space, whereas a constructible cosheaf requires the additional structure of a stratification.
Both of these notions dualize to the setting of sheaves and constructible sheaves.
It should be noted that the definition of a constructible cosheaf is not a simple dualization of the definition of a constructible sheaf that one might find in~\cite{KS}, for example.
The key difference is that we use the collection of basic cone-like open sets associated to a stratified space in an essential way.
This requires that we also consider a different notion of cover, as we explain below.

\begin{defn}
A \define{pre-cosheaf} $\cosheaf{F}$ assigns to each open set $U$ of a topological space $X$ an object $\cosheaf{F}(U)$ in some target category $\Omega$.
Moreover, whenever we have a pair of open sets $U\subseteq V$ a pre-cosheaf assigns a morphism $\cosheaf{F}(U\subseteq V): \cosheaf{F}(U) \to \cosheaf{F}(V)$ between the corresponding objects.
Said succinctly, a pre-cosheaf is a functor $\cosheaf{F}:\Open(X) \to \Omega$, where $\Open(X)$ denotes the poset of all open sets, partially ordered by inclusion.
\end{defn}

\begin{defn}
Let $x$ be a point of a topological space $X$ and let $\cosheaf{F}$ be a pre-cosheaf.
The \define{costalk} at $x$ of $\cosheaf{F}$, written $\cosheaf{F}(x)$, is the (inverse) limit of $\cosheaf{F}$ restricted to those open sets $U$ containing $x$.
Note that $\cosheaf{F}(x)$ is not $\cosheaf{F}$ applied to an open set.
However, for any open set $U$ containing $x$, we let
$\cosheaf{F}(x \in U) : \cosheaf{F}(x) \to \cosheaf{F}(U)$
denote the canonical morphism given by the definition of a limit.
\end{defn}

In order to make sure that costalks are well defined, we assume that $\Omega$ is a complete category.
Since the cosheaf axiom, described below, uses colimits indexed by a cover, we henceforth assume that $\Omega$ has all small limits and colimits, i.e.~we assume that $\Omega$ is a bi-complete category.

\begin{defn}\label{defn:cover}
A \define{cover} of an open set $U\subseteq X$ is a collection of open sets $\cU \subseteq \Open(X)$ so that the union $\cup_{U_i\in\cU} U_i =U$.
A \define{complete cover} of an open set $U$ is a cover $\cU$ of $U$ with the property that whenever $U_i,U_j\in \cU$, there exists a subcollection $\cU_{ij}\subseteq \cU$ that covers $U_i\cap U_j$.
\end{defn}

\begin{ex}[Complete Covers and Bases]
Complete covers provide a good notion of a cover when using a basis $\mathcal{B}$ of a topological space $X$.
If $U\in \Open(X)$ is an open set and $\mathcal{B}_U$ is the collection of basic opens inside of $U$, then $\mathcal{B}_U$ defines a complete cover of $U$.
In particular, for a conically stratified space $X$ the collection of all cone-like basic opens $\Basic(X, \Sstrat)$ is a complete cover of $X$.
\end{ex}

\begin{rmk}[Categorical Remark]
Any cover $\cU=\{U_i\}$ can be viewed as a poset where $U\leq U'$ whenever $U\subseteq U'$.
The inclusion of $\cU \subseteq \Open(X)$ then defines a map of posets, which is equivalent to saying that the map defines a functor between these two categories.
To say that $\cU$ covers $U$ is to say that the colimit of $\cU$, viewed as a diagram in $\Open(X)$, is $U$.
\end{rmk}

We now introduce our definition of a cosheaf.

\begin{defn}\label{defn:cosheaf}
Let $X$ be a topological space and let $\cosheaf{F}:\Open(X) \to \Omega$ be a pre-cosheaf.
Given a cover $\cU$ we use the notation $\cosheaf{F}|_{\cU}$ to refer to
the pre-composition of $\cosheaf{F}$ with the inclusion functor $\cU \subseteq \Open(X)$.
Note that $\cosheaf{F}|_{\cU}$ is a functor from $\cU \to \Omega$, i.e.~$\cosheaf{F}|_{\cU}$ is a diagram in $\Omega$ indexed by cover elements and inclusion of cover elements.
Let $\colim \cosheaf{F}|_{\cU}$ denote the colimit of this diagram, which is an object in $\Omega$. In other words
\[
  \colim \cosheaf{F}|_{\cU} := \varinjlim_{U_i\in \cU} \cosheaf{F}(U_i).
\]
We say that $\cosheaf{F}$ is a \define{cosheaf} if for every open set $U$ and every
\emph{complete cover} $\cU$ of $U$,
the universal morphism from the colimit $\colim \cosheaf{F}|_{\cU} \to \cosheaf{F}(U)$
is an isomorphism.
\end{defn}

\begin{rmk}[Different Cosheaf Axioms]
Traditionally, the sheaf or cosheaf axiom is phrased using \v Cech covers instead of complete covers.
A \define{\v{C}ech cover} of an open set $U$ is a cover $\cU$ of $U$ with the property that whenever two cover elements $U_1, U_2 \in \cU$
have a non-empty intersection, then $U_1 \cap U_2$ is also in $\cU$.
Note that every \v Cech cover is automatically a complete cover so our notion of a cosheaf includes the traditional definition of a cosheaf.
\end{rmk}

We now introduce the fundamental definition of this section.

\begin{defn}\label{defn:constr-cosheaf}
Let $(X,\Sstrat)$ be a conically stratified space.
A cosheaf
$\cosheaf{F} : \Open(X) \to \Omega$ is \define{$\Sstrat$-constructible} if
\begin{quote}
for each stratum $S\in \Sstrat$ and every pair of $S$-basic opens $U\subseteq V$ the morphism $\cosheaf{F}(U) \to \cosheaf{F}(V)$ is an isomorphism.
\end{quote}
A \define{morphism} $\phi : \cosheaf{F} \to \cosheaf{G}$ between two $\Sstrat$-constructible cosheaves is simply a natural transformation of functors.
Let $\Csh_{\Omega}(X, \Sstrat)$ denote the category whose objects are $\Sstrat$-constructible cosheaves valued in $\Omega$ and whose morphisms are
natural transformations.
\end{defn}

We now mention a particularly important special class of constructible cosheaves, where the assigned objects do not vary among strata.

\begin{defn}\label{defn:locally-constant}
Let $(X,\Sstrat)$ be a conically stratified space and
let $\cosheaf{F}:\Open(X)\to \Omega$ be a cosheaf.
We say that $\cosheaf{F}$ is \define{locally constant} if
\begin{quote}
for every pair of basic opens $U\subseteq V$, regardless of their associated strata, the morphism $\cosheaf{F}(U) \to \cosheaf{F}(V)$ is an isomorphism.
\end{quote}
The category of locally constant cosheaves $\Loc_{\Omega}(X, \Sstrat)$ is a full subcategory of $\Csh_{\Omega}(X, \Sstrat)$.
\end{defn}

\begin{rmk}
As observed in Remark~\ref{rmk:mfld-strata}, every basic open restricts to a basic open on each stratum.
Thus our definition implies that a constructible cosheaf restricts to a locally constant cosheaf over each stratum because every basic open is automatically associated to a single stratum.
\end{rmk}

We conclude this section with a remark.

\begin{rmk}
A morphism $\phi : \cosheaf{F} \to \cosheaf{G}$ of $\Sstrat$-constructible cosheaves
is locally constant over each stratum.
This is because for every $S\in \Sstrat$ and any pair of $S$-basic opens $U \subseteq V$ the following diagram commutes:
	\begin{equation*}
	\xymatrix{
	\cosheaf{F}(U) \ar[rr]^{\cosheaf{F}(U \subseteq V)}_\cong \ar[d]_{\phi(U)}
	&& \cosheaf{F}(V) \ar[d]^{\phi(V)} \\
	\cosheaf{G}(U) \ar[rr]_{\cosheaf{G}(U \subseteq V)}^\cong && \cosheaf{G}(V).
	}
	\end{equation*}
\end{rmk}

%%%%%%%%%%%%%%%%%%%%%%%%%%%%%%%%%%%%%%%%%%%%%%%%%
\section{The Entrance Path Category}
\label{sec:entrance_paths}
%%%%%%%%%%%%%%%%%%%%%%%%%%%%%%%%%%%%%%%%%%%%%%%%%

The fundamental groupoid of a space is a category whose objects are points
of the space and whose morphisms are homotopy classes of paths.
In this section, we introduce the entrance path category of a stratified space
which is similar to the fundamental groupoid with the exception that paths have to be directed in a precise sense: a path can leave one stratum only by entering a lower-dimensional one.
In the case that $(X,\Sstrat)$ has exactly one stratum for each component of $X$, the entrance path category is the fundamental groupoid of the space.
We conclude this section by defining representations of the entrance path category, which we ultimately prove are equivalent to constructible cosheaves.

\begin{defn}
Let $\Ispace$ be the closed interval $[0,1]$.
An \define{entrance path} is a stratification $(\Ispace, \Tstrat)$ of the interval and
a stratum preserving map $\alpha: ( \Ispace, \Tstrat ) \to (X, \Sstrat)$ such that the dimension of the stratum
containing $\alpha(t)$ is non-increasing with increasing $t$.
\end{defn}

By compactness of $\Ispace$, any stratification $(\Ispace, \Tstrat)$ has a finite number of $0$-strata.
It will be convenient to write $(\Ispace, \Tstrat)$ as $\big( \Ispace, \{0, \ldots, 1\} \big)$ where
$\{0, \ldots, 1\} \subset \Tstrat$ is the set of $0$-strata.
The concatenation of $\alpha$ followed by $\beta$, where $\beta(0)=\alpha(1)$,
is a new entrance path $\beta \ast \alpha$ defined as
	\begin{equation*}
	\beta \ast \alpha (t) =
		\begin{cases}
		\alpha(2t) & \text{for } 0 \leq t \leq \sfrac{1}{2} \\
		\beta(2t-1) & \text{for } \sfrac{1}{2} \leq t \leq 1.
		\end{cases}
	\end{equation*}

\begin{defn}
A \define{reparametrization} of an entrance path $\alpha : (\Ispace,\Tstrat) \to (X,\Sstrat)$ is an entrance path $\alpha \circ \phi : (\Ispace,\Tstrat') \to (X,\Sstrat)$
where $\phi : (\Ispace,\Tstrat') \to (\Ispace, \Tstrat)$ is an isomorphism in $\Strat$
that fixes endpoints, i.e. $\phi(0)=0$, $\phi(1)=1$.
Note that the number of strata in $\Tstrat$ and $\Tstrat'$ is the same.
\end{defn}

\begin{defn}
\label{defn:elementary_homotopy}
Let $\Delta$ be the $2$-simplex $[v_0v_1v_2]$ stratified by its vertices, edges, and face.
An \define{elementary homotopy} from an entrance path $\alpha : \big( \Ispace, \{0,\sfrac{1}{2},1\} \big) \to (X, \Sstrat)$ to an entrance path
$\beta : \big( \Ispace, \{0,1\} \big) \to (X, \Sstrat)$, written $\alpha \Rightarrow \beta$,
is a diagram in $\Strat$

	\begin{equation*}
	\label{eq:elementary_homotopy}
	\begin{gathered}
	\xymatrix{
	\big( \Ispace, \{0,\sfrac{1}{2},1\} \big) \ar[rdd]^{\tilde{\alpha}} \ar[rdddd]_\alpha & & \big( \Ispace, \{0,1\} \big) \ar[ldd]_{\tilde{\beta}} \ar[ldddd]^{\beta} \\
	&& \\
	& \Delta \ar[dd]^{h} & \\
	&& \\
	& (X, \Sstrat),
	}
	\end{gathered}
	\end{equation*}
satisfying the following conditions:
	\begin{itemize}
	\item $h|_{[v_0,v_1] \cup [v_1,v_2]} \circ \tilde{\alpha} =\alpha$ and $h|_{[v_0,v_2]} \circ \tilde{\beta} =\beta$;
	\item $h|_{[v_0,v_1,v_2] \setminus [v_1,v_2]}$ is mapped to a single stratum; and
	\item there is a basic open neighborhood $U \in \Basic(X, \Sstrat)$
	of the point $h(v_2) = \alpha(1) = \beta(1)$ such that the image of $h$ belongs to $U$.
	\end{itemize}
\end{defn}

If $\alpha \Rightarrow \beta$, then $\alpha(0) = \beta(0)$ and $\alpha(1) = \beta(1)$.
The image of the elementary homotopy $h$ intersects at most three strata in $\Sstrat$ and so does the entrance path $\alpha$.
The entrance path $\beta$ intersects at most two strata in $\Sstrat$.
We think of an elementary homotopy $\alpha \Rightarrow \beta$
as a shortcutting operation.
That is, $\beta$ may skip one of the strata $\alpha$ goes through.

\begin{defn}
We say two entrance paths $\alpha$ and $\beta$ are \define{elementarily related}, written $\alpha \rightleftharpoons \beta$,
if there are reparametrizations $\alpha \circ \phi_{\alpha}$ and $\beta \circ \phi_{\beta}$ and compositions
$\alpha \circ \phi_{\alpha} = \alpha_3 \ast \alpha_2 \ast \alpha_1$ and $\beta \circ \phi_{\beta} = \beta_3 \ast \beta_2 \ast \beta_1$
such that $\alpha_1 = \beta_1$, $\alpha_2 \Rightarrow \beta_2$
or $\beta_2 \Rightarrow \alpha_2$, and $\alpha_3 = \beta_3$.
\end{defn}

\begin{figure}[h]
\centering
\includegraphics[width=.9 \textwidth]{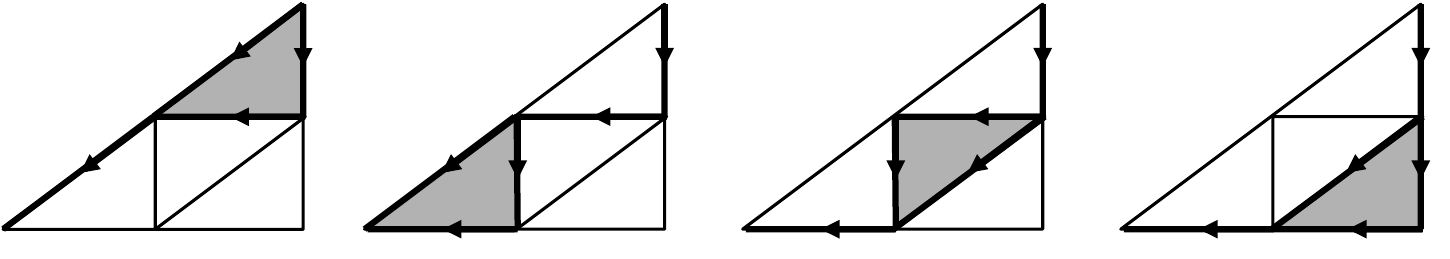}
\caption{An equivalence of entrance paths is a finite number of elementary relations.}
\label{fig:refined_homotopy}
\end{figure}

\begin{defn}
\label{defn:equiv-paths}
We say two entrance paths $\alpha$ and $\beta$ are \define{equivalent}, written $\alpha \approx \beta$, if there a finite number of elementary relations
$$\alpha \rightleftharpoons \gamma_1 \rightleftharpoons \gamma_2 \rightleftharpoons \cdots \rightleftharpoons \gamma_n \rightleftharpoons \beta.$$
See Figure \ref{fig:refined_homotopy}.
Note $\approx$ is an equivalence relation on the set of all entrance paths.
\end{defn}

\begin{ex}[Changing the Stratification of the Interval]
\label{ex:change_stratification}
Note that the data of an entrance path includes information about the stratification of the interval, but under certain situations this can be ignored.
For example, suppose $(X,\Sstrat)$ is a stratified space with only one stratum $S$ and $\gamma :(\Ispace, \Tstrat) \to (X,\Sstrat)$ and $\gamma' : (\Ispace, \Tstrat') \to (X,\Sstrat)$ are two entrance paths mapping into $S$ that are identical as paths, i.e.~$\gamma(t)=\gamma'(t)$ for all $t\in \Ispace$, but $\Tstrat$ and $\Tstrat'$ have a different number of strata in $\Ispace$.
We claim that $\gamma$ and $\gamma'$ are equivalent under our definition.

To see this, consider the simplest situation where we have identical paths $\gamma$ and $\gamma'$ where $\gamma$ has three zero-dimensional strata $\{0,t_1,1\}$ in $\Tstrat$ and $\gamma'$ has only the two necessary zero-dimensional strata $\{0,1\}$ in $\Tstrat'$.
Again, we assume that $\gamma(t)=\gamma'(t)$ for all $t\in \Ispace$.
Applying a reparameterization, we can replace $\gamma$ with $\alpha :(\Ispace,\{0,1/2,1\}) \to (X,\Sstrat)$ and $\gamma'$ with $\beta:(\Ispace, \{0,1\}) \to (X,\Sstrat)$ so that we still have that $\alpha(t)=\beta(t)$ for all $t\in \Ispace$.
Using barycentric coordinates we can give an explicit elementary homotopy $h$ relating $\alpha$ and $\beta$ by using the formula $h(t_0,t_1,t_2)=\alpha(\frac{1}{2}t_1 + t_2)=\beta(\frac{1}{2}t_1 + t_2)$.
In this setting, an elementary homotopy allows us to ``delete'' one stratum in the domain of $\alpha$ without consequence.

In the general setting, where $\gamma : (\Ispace,\Tstrat) \to (X,\Sstrat)$ is an entrance path that is contained in a single stratum $S$, we can delete zero-dimensional strata in $\Tstrat$ one-by-one to replace $\gamma$ with an equivalent entrance path $\gamma':(\Ispace, \{0,1\}) \to (X,\Sstrat)$.
\end{ex}

\begin{defn}\label{defn:ent-cat}
The \define{entrance path category} $\Ent(X, \Sstrat)$ has an object for each point $x \in X$.
A morphism from $x\in X$ to $y\in X$ is an equivalence class of entrance paths starting at $x$ and ending at $y$.
\end{defn}

\begin{ex}\label{ex:R}
Suppose the real line $\R$ is stratified into $n$ vertices $x_1 < \cdots < x_n$ and $n+1$ open intervals with $e_0 = (-\infty, x_1)$, $e_i=(x_i,x_{i+1})$ for $i=1,\cdots, n-1$, and $e_n=(x_n,\infty)$.
By picking a point $y_i\in e_i$ for $i=0,\cdots,n$, we obtain a full subcategory $Z$ of $\Ent(\R,\Sstrat)$ which has $2n+1$ objects and a single morphism from each $y_i$ to each adjacent $x_{j}$.
That is, $\Ent(\R, \Sstrat)$ is equivalent to the category $Z$ displayed below where each arrow corresponds to the unique equivalence class of entrance paths starting at $y_i$ and ending at $x_{i+1}$ or $x_i$, depending on which of the latter exist.
\[
Z: \qquad y_0 \rightarrow x_1 \leftarrow y_1 \rightarrow \cdots \leftarrow y_{n-1} \rightarrow x_n \leftarrow y_n
\]
We refer to $Z$ as the \define{zigzag category} associated to $\Ent(\R,\Sstrat)$.
Since the inclusion functor $Z \to \Ent(\R,\Sstrat)$ is fully faithful and essentially surjective, we conclude that $Z$ is equivalent to $\Ent(\R,\Sstrat)$.
\end{ex}

\begin{ex}\label{ex:C}
Suppose the complex plane $X=\CC$ is stratified into two strata $S=\CC - \{0\}$ and $S'=\{0\}$.
Choose a point $x \in S$.
Then the entrance path category $\Ent(X, \Sstrat)$ is equivalent to the finite subcategory $\Ccat$ consisting of two objects
$x$ and $0$,
a morphism $\sigma:x\to x$ that freely generates $\pi_1(\CC-\{0\},x) = \Hom_\Ccat(x,x)\cong \ZZ$ as a group,
and a unique morphism $\alpha \in \Hom_\Ccat(x,0)$.
In particular, $\alpha \ast \sigma=\alpha$.
\end{ex}

We now recall the following classical technical lemma, which will be useful in proving Propositions \ref{prop:terminal-object} and \ref{prop:groupoid} and Theorem \ref{thm:classification}.

\begin{lem}{\cite[Theorem 15.4]{munkres1984elements}}
\label{lem:refinement}
Let $X$ be a triangulable compact space and $\Ocat$ an open covering.
Then there is a triangulation $\Tstrat$ of $X$ such that every simplex in $\Tstrat$ lies in an element of $\Ocat$.
\end{lem}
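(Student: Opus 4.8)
The plan is to reduce the statement to one about finite simplicial complexes and then shrink the mesh of a triangulation by iterated barycentric subdivision until every simplex is small enough to sit inside a single element of $\Ocat$. First I would invoke triangulability to fix a homeomorphism $\phi : |K| \to Z$, where $K$ is a simplicial complex geometrically realized in some $\RR^N$. Since $Z$ is compact, $K$ may be taken finite, so $|K|$ inherits the Euclidean metric from $\RR^N$ and is itself compact. Pulling the cover back along $\phi$ yields an open covering $\{\,\phi^{-1}(O) : O \in \Ocat\,\}$ of $|K|$.

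The second ingredient is the Lebesgue number lemma. Because $|K|$ is a compact metric space, there is a number $\delta > 0$ such that every subset of $|K|$ of diameter less than $\delta$ is contained in some member $\phi^{-1}(O)$ of the pulled-back cover. It therefore suffices to produce a triangulation of $|K|$ all of whose closed simplices have diameter less than $\delta$; transporting such a triangulation along $\phi$, and using that $\phi$ then carries each simplex into the corresponding $O$, yields the desired $\Tstrat$.

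The third and central step is controlling the mesh $\mu(K)$, defined as the largest diameter of a simplex of $K$ (equivalently, the length of its longest edge, since a simplex is convex). Here I would use the standard estimate that a single barycentric subdivision contracts the mesh: if $K$ has dimension $d$, then $\mu(\mathrm{Sd}\,K) \le \tfrac{d}{d+1}\,\mu(K)$. Iterating gives $\mu(\mathrm{Sd}^m K) \le \bigl(\tfrac{d}{d+1}\bigr)^m \mu(K)$, which tends to $0$ as $m \to \infty$ because $\tfrac{d}{d+1} < 1$. Choosing $m$ so that this quantity drops below $\delta$, and letting $\Tstrat$ be the triangulation of $Z$ obtained by transporting $\mathrm{Sd}^m K$ along $\phi$, completes the argument.

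The main obstacle is the mesh-contraction estimate itself, which is the geometric heart of the proof. Establishing it amounts to the elementary but careful bound that if $b$ is the barycenter of a $d$-dimensional simplex $\sigma$ and $x$ is any point of $\sigma$, then $|b - x| \le \tfrac{d}{d+1}\,\mathrm{diam}(\sigma)$; one then observes that every edge of $\mathrm{Sd}\,K$ joins the barycenters of a nested pair of faces $\tau \subseteq \sigma$ of $K$, and since the barycenter of $\tau$ lies in $\sigma$, its length is bounded by $\tfrac{d}{d+1}\,\mu(K)$. Everything else (finiteness of the triangulation, metrizability, and the Lebesgue number lemma) is routine point-set topology.
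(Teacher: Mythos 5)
Your proof is correct and is precisely the standard argument behind the cited result, which the paper quotes from Munkres without reproving: reduce to a finite geometric complex in $\RR^N$, take a Lebesgue number for the pulled-back cover, and iterate barycentric subdivision using the mesh bound $\mu(\mathrm{Sd}\,K)\le \tfrac{d}{d+1}\,\mu(K)$ until the mesh drops below that number. Nothing further is needed.
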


Example \ref{ex:C} is an instance of a more general result, which we now introduce.

\begin{prop}\label{prop:terminal-object}
Let $(X,\Sstrat)$ be a conically stratified space.
Let $U$ be a basic open associated to a stratum $S\in\Sstrat$.
Any point $y\in U \cap S$ is terminal in the category $\Ent(U,\Sstrat_U)$, i.e.~there is only one equivalence class of entrance paths from any point $x\in U$ to $y\in U\cap S$.
\end{prop}
\begin{proof}
Associated to the basic open $U$ is a filtration-preserving homeomorphism $f : \Rspace^d \times C(L)
\to (U, \Sstrat_U)$
where $L$ is a filtered space.
For convenience, we forget about the map $f$ and identify $U$ with $\Rspace^d \times C(L)$.
Define the deformation retraction $\pi : \Rspace^d \times C(L) \times [0, 1] \to \Rspace^d \times C(L)$ as follows.
For a cone point $(u,\star) \in \Rspace^d \times C(L)$, let $\pi(u, \star, t) := (u, \star)$
for all $t$.
Any other point looks like $(u, (p, r), t)$ where $p \in L$ and $r > 0$.
In this case, let
	\begin{equation*}
	\pi (u, (p, r), t) :=
		\begin{cases}
		\big( u, (p, (1-t)r) \big) & \text{for $0 \leq t < 1$} \\
		(u, \star) & \text{for $t = 1$.}
		\end{cases}
	\end{equation*}

Consider any entrance path
$\alpha : (\Ispace, \Tstrat) \to \Rspace^d \times C(L)$
such that $\alpha(0) = x$ and $\alpha(1) = y$.
There is a canonical entrance path $\gamma$ from $x$ to $y$: start at $x$, follow the
deformation lines of $\pi$ down to $\pi(x,1) \in \Rspace^d \times \{\star\}$, and then take the straight
line to $y \in \Rspace^d \times \{\star\}$.
In other words, define $\gamma : (\Ispace, \{0,\sfrac{1}{2}, 1\}) \to \Rspace^d \times C(L)$
as
	\begin{equation*}
	\gamma(t) := \begin{cases}
	\pi ( \alpha(0), 2t ) & \text{for $0 \leq t < \sfrac{1}{2}$} \\
	(2-2t) \pi ( \alpha(0),1) + (2t-1) y & \text{for $\sfrac{1}{2} \leq t \leq 1$}.
	\end{cases}
	\end{equation*}
To prove our claim, it is enough prove that $\alpha$ is equivalent to $\gamma$.

Suppose $\Tstrat = \{t_0= 0, t_1, \cdots, t_n=1\}$ is the stratification of the domain of $\alpha$.
The projection of $\alpha$ along $\pi(\bullet,1)$ to the base stratum is an entrance path
$\beta := \pi(\alpha, 1)$ lying entirely in $\Rspace^d \times \{\star\}$.
Let us use the same domain $(\Ispace, \Tstrat)$ for $\beta$ even though $\Tstrat$ may not be a minimal
stratification.
Consider the following diagram, where each arrow corresponds to an entrance path:
\begin{equation*}
\begin{tikzcd}
\alpha(t_0) \ar[r] \ar[d, "\delta_0"] \ar[rd, ] & \alpha(t_1) \ar[rd] \ar[r] \ar[d, "\delta_1"]
& \alpha(t_2) \ar[r] \ar[d, "\delta_2"] \ar[rd, ]
& \cdots \ar[r] \ar[rd, ]  & \alpha(t_{n-2}) \ar[r] \ar[rd, ] \ar[d, "\delta_{n-2}"]
& \alpha(t_{n-1}) \ar[r] \ar[rd] \ar[d, "\delta_{n-1}"]
& \alpha(t_n) \ar[d, "\delta_n = y"] \\
\beta(t_0) \ar[r] & \beta(t_1) \ar[r] & \beta(t_2) \ar[r] & \cdots \ar[r] & \beta(t_{n-2}) \ar[r] & \beta(t_{n-1})
\ar[r] & \beta(t_n) .
\end{tikzcd}
\end{equation*}
Each arrow $\alpha(t_i) \to \alpha(t_{i+1})$ is the restriction of $\alpha$ from
$t_i$ to $t_{i+1}$, possibly reparametrized to be a path whose domain is $[0,1]$.
It is easy to see that the composition of the top row is equivalent to $\alpha$.
Each arrow $\beta(t_i) \to \beta(t_{i+1})$ is similarly defined to be an entrance path gotten by restricting $\beta$ from
$t_i$ to $t_{i+1}$ and reparametrizing, if necessary.
The composition of the bottom row is equivalent to $\beta$.
The deformation lines of $\pi$ induce a homotopy $\delta : (\Ispace, \Tstrat) \times [0,1] \to \Rspace^d \times C(L)$ defined as $\delta(t,s) := \pi ( \alpha(t), s )$ which takes $\alpha$ to $\beta$.
If the point $\alpha(t)$ belongs to a stratum $S'$, then $\pi ( \alpha(t), s )$ remains in $S'$ for all
$s \in [0,1)$.
Let $\delta_i : ( \Ispace, \{0,1\}) \to \Rspace^d \times C(L)$ be the entrance path
$\delta_i(s) := \pi ( \alpha(t_i), s )$.
We now have $n-1$ squares and each square maps to $\Rspace^d \times C(L)$ in a stratum
preserving way via $\delta$.
Now divide each square into two triangles by introducing the diagonal entrance path.
Each triangle is now an elementary homotopy.
By a finite number of elementary homotopies, $\alpha \approx \beta \ast \delta_0$.
The entrance path $\beta \ast \delta_0$ is almost our canonical entrance path $\gamma$.
If we write $\gamma = \gamma_2 \ast \gamma_1$ where $\gamma_1$ is the first half of the path
and $\gamma_2$ the second half, then $\gamma_1 \approx \delta_0$.
Both $\gamma_2$ and $\beta$ are entrance paths lying entirely in $\Rspace^d \times \{ \star \}$
between the same pair of points.
By the following Proposition~\ref{prop:groupoid} or simply because
$\Rspace^d \times \{ \star \}$ is contractible,
$\beta \approx \gamma_2$.
Therefore $\alpha \approx \gamma$.
\end{proof}

\begin{prop}
\label{prop:groupoid}
Suppose $(X, \Sstrat)$ is a conically stratified space with a unique stratum $X$.
Then the category $\Ent(X, \Sstrat)$ is equivalent to the fundamental groupoid $\pi_1(X)$.
\end{prop}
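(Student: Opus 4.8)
The plan is to build a functor $F : \Ent(X,\Sstrat)\to\pi_1(X)$ and show it is an equivalence. Because $X$ consists of a single stratum, the dimension condition on entrance paths is vacuous, so entrance paths are exactly ordinary paths in $X$, and every continuous map into $X$ is automatically stratum-preserving. I would define $F$ to be the identity on objects (both categories have the points of $X$ as objects) and to send the $\approx$-class of an entrance path $\alpha$ to its homotopy class rel endpoints. To see $F$ is well defined and functorial, I would check that each generating move of $\approx$ is a homotopy rel endpoints: a reparametrization is homotopic to the original path by the explicit homotopy $\phi_s$ recorded in the definition, and an elementary homotopy $\alpha\Rightarrow\beta$ produces a map $h:\Delta\to X$ whose restriction to the boundary exhibits the concatenation $\tilde\alpha$ and the edge path $\tilde\beta$ as homotopic rel endpoints inside the contractible simplex $\Delta$; composing with $h$ then shows $\alpha\simeq\beta$. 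Compatibility with concatenation is immediate, so $F$ is a functor.

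Fullness and essential surjectivity are then free: $F$ is a bijection on objects, and every path is an entrance path, so every homotopy class of paths is realized. The content of the proposition is faithfulness: if entrance paths $\alpha$ and $\beta$ are homotopic rel endpoints, I must produce a finite chain of elementary relations witnessing $\alpha\approx\beta$. Let $H:\Ispace\times\Ispace\to X$ be a homotopy with $H(\cdot,0)=\alpha$, $H(\cdot,1)=\beta$, and $H(0,\cdot)$, $H(1,\cdot)$ constant at the two endpoints. The pullbacks $\{H^{-1}(U)\mid U\in\Basic(X,\Sstrat)\}$ form an open cover of the compact triangulable square, so by Lemma~\ref{lem:refinement} there is a triangulation $\Tstrat$ of $\Ispace\times\Ispace$ with each closed simplex carried by $H$ into a single basic open; I may take $\Tstrat$ to contain the four sides as subcomplexes. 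Up to reparametrization, $\alpha$ is the $H$-image of the bottom edge-path of $\Tstrat$ and $\beta$ is the $H$-image of the top edge-path, while the two vertical sides map to the constant paths at the endpoints.

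The local heart of the argument is that pushing an edge-path across a single triangle is an elementary relation. If two edge-paths $P$ and $P'$ in the $1$-skeleton of $\Tstrat$ agree except that $P$ traverses the two edges $[a,b],[b,c]$ of a triangle $[a,b,c]\in\Tstrat$ where $P'$ traverses $[a,c]$, then identifying $[a,b,c]$ with $\Delta$ via $v_0\mapsto a$, $v_1\mapsto b$, $v_2\mapsto c$, the restriction $H|_{[a,b,c]}$ is a map $\Delta\to X$ landing in a basic open $U$ that contains $H(c)$; taking $U_2=U$ exhibits $H\circ P\Rightarrow H\circ P'$ on the middle factor, with the remainder of $P=P'$ serving as common prefix and suffix, so $H\circ P\rightleftharpoons H\circ P'$. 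Spur insertions and deletions (backtracks $[a,b],[b,a]$) are likewise elementary homotopies into the basic open containing $H([a,b])$.

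Finally I would assemble these local moves. Since the square is simply connected, the boundary edge-paths satisfy $(\text{bottom})\ast(\text{right})\simeq(\text{left})\ast(\text{top})$ rel endpoints inside the triangulated disk, and the classical combinatorial edge-path theorem for simplicial complexes expresses this relation as a finite sequence of single-triangle and spur moves. This is precisely the step where I expect the main obstacle to lie, and it is exactly where Treumann's inductive peeling of boundary triangles breaks down; invoking the established edge-path theorem for triangulated $2$-disks avoids that gap. Applying $H$ and the previous paragraph converts the sequence into a finite chain of elementary relations, so $H\circ(\text{bottom}\ast\text{right})\approx H\circ(\text{left}\ast\text{top})$. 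Absorbing the constant vertical sides by reparametrization gives $\alpha\approx\beta$, establishing faithfulness and hence the equivalence $\Ent(X,\Sstrat)\simeq\pi_1(X)$.
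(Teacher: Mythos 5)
Your proposal follows essentially the same route as the paper: identify entrance paths with ordinary paths, observe that equivalence of entrance paths implies homotopy, and for the converse pull back a basic-open cover along the homotopy $H:\Ispace^2\to X$ and apply Lemma~\ref{lem:refinement} to get a triangulation whose triangles land in basic opens. The only difference is that you spell out the final step --- converting the triangulation into a finite chain of elementary relations via single-triangle and spur moves and the combinatorial edge-path theorem --- which the paper's proof asserts in one sentence; your version is correct and, if anything, more complete.
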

\begin{proof}
Recall the fundamental groupoid has points as objects and homotopy classes of paths as morphisms.
Thus any path is an entrance path.
Moreover, any two equivalent entrance paths are homotopic by definition of an equivalence.
It remains to be seen that every homotopy $H : \Ispace^2 \to X$ that is constant on $H_s(0)$ and $H_s(1)$
is an equivalence of entrance paths.
A cover of $X$ by open balls (i.e. basic opens) lifts to an open cover $\Ocat$ of $\Ispace^2$.
By Lemma \ref{lem:refinement}, there is a triangulation of $\Ispace^2$ such that each triangle maps
to an open ball in the covering.
Thus $H_0$ and $H_1$ are equivalent entrance paths.
\end{proof}

\begin{defn}\label{defn:constr-entrance}
A \define{representation of the entrance path category} is a functor $\Ffunc : \Ent(X, \Sstrat) \to \Omega$.
When $\Ent(X,\Sstrat)$ is equivalent to the fundamental groupoid $\pi_1(X)$, we call such a representation a \define{local system}.
A \define{morphism} $\phi : \Ffunc \to \Gfunc$ between two representations
is a natural transformation of functors.
Let $\big [ \Ent(X, \Sstrat), \Omega \big ]$ be the category of representations of $\Ent(X,\Sstrat)$.
\end{defn}

A morphism $\phi : \Ffunc \to \Gfunc$ between two representations of the entrance path category restricts to a morphism of local systems
over each stratum.
This is because for each stratum $S \in \Sstrat$, $\Ent(X, \Sstrat)$ restricted
to $S$ is the fundamental groupoid of $S$.
Furthermore, for each pair of points $s_1, s_2 \in S$ and any homotopy class
of paths $\alpha : s_1 \to s_2$, the following diagram commutes:
	\begin{equation*}
	\xymatrix{
	\Ffunc(s_1) \ar[rr]^{\Ffunc(\alpha)}_\cong \ar[d]_{\phi(s_1)} && \Ffunc(s_2) \ar[d]^{\phi(s_2)} \\
	\Gfunc(s_1) \ar[rr]_{\Gfunc(\beta)}^\cong && \Gfunc(s_2).
	}
	\end{equation*}

\begin{ex}[Representations of a Stratified Real Line]
  Let $(\R,\Sstrat)$ be the real line stratified into two vertices $x_0=0$ and $x_1=1$ and three 1-strata $e_0=(-\infty,0)$, $e_1=(0,1)$ and $e_2=(1,\infty)$.
  Consider the following two representations.
  First we define $\Ffunc:\Ent(\R,\Sstrat)\to\Vect_{\Bbbk}$ as follows:
  \begin{itemize}
    \item On objects: For $x\in [0,1]$, we set $\Ffunc(x):=\Bbbk$. For $x \in \R - [0,1]$, we set $\Ffunc(x):=0$.
    \item On morphisms: For any $x,y \in [0,1]$ and any entrance path $\alpha: x \rightarrow y$, we put $\Ffunc(\alpha):=\id$. For any other entrance path $\alpha$, we put $\Ffunc(\alpha):=0$.
  \end{itemize}
  Our second representation $\Gfunc:\Ent(\R,\Sstrat)\to\Vect_{\Bbbk}$ is defined as follows:
  \begin{itemize}
    \item On objects: For $x\in [0,1)$, we set $\Gfunc(x):=\Bbbk$. For $x \in \R - [0,1)$, we set $\Gfunc(x):=0$.
    \item On morphisms: For any $x,y \in [0,1)$ and any entrance path $\alpha: x \rightarrow y$, we put $\Gfunc(\alpha):=\id$.
    For any other entrance path $\alpha$, we put $\Gfunc(\alpha):=0$.
  \end{itemize}
  Consider the morphism $\phi:\Ffunc \to \Gfunc$ that is defined as $\phi(x):=\id_{\Bbbk}$ on $x\in [0,1)$ and $\phi(x):=0$ otherwise.
  To check that this is a valid morphism, consider the following commutative diagram, which illustrates the natural transformation $\phi$ over the support of $\Ffunc$:

  \begin{equation*}
  \begin{tikzcd}
  \Bbbk = \Ffunc(0) \arrow[dd,"\id_{\Bbbk}=\phi(0)"'] & \arrow[l,"\id_{\Bbbk}"'] \arrow[dd,"\id_{\Bbbk}=\phi(x)"'] \arrow[r, bend left, "\id_{\Bbbk}"] \Ffunc(x) & \arrow[l,bend left,"\id_{\Bbbk}"] \Ffunc(y) \arrow[r,"\id_{\Bbbk}"] \arrow[dd,"\phi(y)=\id_{\Bbbk}"] & \Ffunc(1) =\Bbbk \arrow[dd,"\phi(1)=0"] \\
  & & & \\
  \Bbbk = \Gfunc(0) & \arrow[l,"\id_{\Bbbk}"] \Gfunc(x) \arrow[r,bend left,"\id_{\Bbbk}"] & \arrow[l,bend left,"\id_{\Bbbk}"] \Gfunc(y) \arrow[r,"0"'] & \Gfunc(1)=0.
  \end{tikzcd}
  \end{equation*}

  Note that we have chosen two points $0 < x < y < 1$ arbitrarily in order to emphasize that there is an uncountable collection of points, i.e.~objects in $\Ent(\R,\Sstrat)$.
  However, for any pair of objects $x$ and $y$, there is at most one morphism from $x$ to $y$.
  Although $\Ffunc$ and $\Gfunc$ may seem like complicated diagrams in $\Vect_{\Bbbk}$, using Example \ref{ex:R}, both $\Ffunc$ and $\Gfunc$ are equivalent to representations of the equivalent zigzag subcategory $Z$ of $\Ent(\R,\Sstrat)$ with 5 objects
  \[
    y_0 \rightarrow x_0\leftarrow y_1 \rightarrow x_1 \leftarrow y_2.
  \]
  For example, $y_0=-1/2$, $x_0=0$, $y_1=1/2$, $x_1=1$ and $y_2=3/2$ will work.

  It is also convenient to summarize $\Ffunc$ and $\Gfunc$ via their colimits. The morphism $\phi:\Ffunc \to \Gfunc$ defined above will also induce a canonical morphism in $\Vect_{\Bbbk}$ from $\colim \Ffunc$ to $\colim \Gfunc$.
  We review this construction briefly.
  First we consider a candidate co-cone to $\Ffunc$:
  \begin{equation*}
  \begin{tikzcd}
    & & \Bbbk & & \\
    & & & & \\
  \Bbbk = \Ffunc(0) \arrow[uurr,"\id_{\Bbbk}"] & \arrow[l,"\id_{\Bbbk}"] \arrow[uur,"\id_{\Bbbk}"] \arrow[rr, bend left, "\id_{\Bbbk}"'] \Ffunc(x) & & \arrow[ll,bend left,"\id_{\Bbbk}"'] \Ffunc(y) \arrow[r,"\id_{\Bbbk}"'] \arrow[uul,"\id_{\Bbbk}"'] & \Ffunc(1) =\Bbbk. \arrow[uull,"\id_{\Bbbk}"']
  \end{tikzcd}
  \end{equation*}
  We leave it to the reader to verify that $\Bbbk$ is, in fact, the object that stands at the end of the initial co-cone in the category of co-cones, which we denote by $\colim \Ffunc$.
  By contrast, the object $\colim \Gfunc$ that stands at the end of the colimit co-cone is the zero vector space:
  \begin{equation*}
  \begin{tikzcd}
    & & 0 & & \\
    & & & & \\
  \Bbbk = \Gfunc(0) \arrow[uurr,"0"] & \arrow[l,"\id_{\Bbbk}"] \arrow[uur,"0"] \arrow[rr, bend left, "\id_{\Bbbk}"'] \Gfunc(x) & & \arrow[ll,bend left,"\id_{\Bbbk}"'] \Gfunc(y) \arrow[r,"0"'] \arrow[uul,"0"'] & \Gfunc(1) =0. \arrow[uull,"0"']
  \end{tikzcd}
  \end{equation*}
  Note that if we consider any co-cone to $\Gfunc$ and pre-compose the legs in that co-cone with the arrows that define $\phi$, then a co-cone to $\Gfunc$ stands as a co-cone to $\Ffunc$.
  Since the colimit of $\Ffunc$ is by definition the initial object in the category of co-cones to $\Ffunc$, there is a unique morphism from $\colim \Ffunc$ to $\colim \Gfunc$ that commutes with all the arrows involved.
  Finally, we note that one could simplify the computation of the colimit using the equivalent representation of $Z$; see Exercise 3.1.xii in~\cite{riehl2017category}.
\end{ex}

%%%%%%%%%%%%%%%%%%%%%%%%%%%%%%%%%%%%%%%%%%%%%%%%%
%%%%%%%%%%%%%%%%%%%%%%%%%%%%%%%%%%%%%%%%%%%%%%%%%
\section{van Kampen}
\label{sec:van_kampen}
%%%%%%%%%%%%%%%%%%%%%%%%%%%%%%%%%%%%%%%%%%%%%%%%%
%%%%%%%%%%%%%%%%%%%%%%%%%%%%%%%%%%%%%%%%%%%%%%%%%
The van Kampen theorem is best thought of as a proof that certain topological constructions are actually cosheaves.
In this section, we prove the van Kampen theorem for the entrance path category.
This will be key in proving that constructible cosheaves and representations of the entrance path category are equivalent.
The fact that representations of the entrance path category, along with their colimits, satisfy the cosheaf axiom is detailed in Section \ref{subsec:cosheaf-of-reps}.

Let $\Cat$ be the category of small categories.
This is the category whose objects are small categories and whose morphisms are functors.
Recall that $\Cat$ has all small limits and colimits, i.e.~$\Cat$ is a bi-complete category~\cite[Cor.~4.5.16]{riehl2017category}.

\begin{defn}
Let $\Ent : \Strat \to \Cat$ be the functor that assigns to each stratified space $(X, \Sstrat)$
the entrance path category $\Ent(X, \Sstrat)$
and to each stratum-preserving map $f : (Y, \Tstrat) \to (X, \Sstrat)$ the induced functor $\Ent(f) : \Ent(Y, \Tstrat) \to \Ent(X, \Sstrat)$.
\end{defn}

Since every open set in a stratified space inherits the structure of a stratified space, we have associated to a conically stratified space $X$ a functor $\Open(X) \to \Strat$ that takes $U$ to $(U,\Sstrat_U)$.
By post-composing this functor with $\Ent:\Strat \to \Cat$, we can think of the entrance path construction on $X$ as a pre-cosheaf of categories.
The van Kampen theorem says that this construction is actually a cosheaf.
The following argument proves the cosheaf axiom for complete covers of $X$, but it can be repeated verbatim for any open subset $U$ and any complete cover of $U$.
This proves that the entrance path construction is actually a cosheaf.

\begin{thm}[van Kampen]
\label{thm:vanKampen}
For a conically stratified space $(X, \Sstrat)$ and any complete cover $\Ucat$
of $X$, the universal functor, i.e.~morphism in $\Cat$, $\colim  \Ent |_{\cU} \to \Ent(X, \Sstrat)$ is an isomorphism.
\end{thm}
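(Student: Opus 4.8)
The plan is to show that the universal functor $\Phi : \colim \Ent|_\Ucat \to \Ent(X, \Sstrat)$ is a bijection on objects and on each hom-set. Recall that colimits in $\Cat$ are computed by first taking the colimit of objects in $\Set$ and then generating morphisms from the disjoint union of the hom-sets of the $\Ent(U)$, subject to the relations coming from composition inside each $\Ent(U)$ and from the inclusion functors $\Ent(V) \to \Ent(U)$ for $V \subseteq U$ in $\Ucat$. Since the object functor $\mathrm{Ob} : \Cat \to \Set$ is a left adjoint (its right adjoint sends a set to the chaotic category on it), it preserves colimits, so the objects of $\colim \Ent|_\Ucat$ are $\colim_{U \in \Ucat}(\text{underlying point set of } U)$. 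The defining property of a basic covering --- that $U \cap V$ is a union of elements of $\Ucat$ --- guarantees that any two representatives of the same point, coming from $U$ and from $V$, are linked through an element $W \in \Ucat$ with $x \in W \subseteq U \cap V$; hence this colimit is exactly the point set of $X$ and $\Phi$ is a bijection on objects.

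For morphisms I would construct a candidate inverse $\Psi$ and verify $\Phi \circ \Psi = \id$ and $\Psi \circ \Phi = \id$. Given an entrance path $\gamma$ in $X$, compactness of $\Ispace$ lets me subdivide $[0,1]$ into finitely many subintervals on each of which $\gamma$ has image inside some $U \in \Ucat$; since a restriction of an entrance path is again an entrance path, this exhibits $\gamma$ as a concatenation $\gamma_n \ast \cdots \ast \gamma_1$ with each $[\gamma_i]$ a morphism of $\Ent(U_{j_i})$. I set $\Psi[\gamma]$ to be the composite of the classes $[\gamma_i]$ in the colimit. That $\Phi \circ \Psi = \id$ is immediate, and $\Psi \circ \Phi = \id$ follows once $\Psi$ is shown independent of the chosen subdivision, because the tautological subdivision of a concatenation recovers the original string.

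The content is therefore the well-definedness of $\Psi$, which splits into two claims. First, independence of subdivision: any two subdivisions admit a common refinement, refining a piece is a composition relation inside a single $\Ent(U_{j_i})$, and a piece whose image lies in $U \cap V$ can, after a further subdivision (again by the intersection axiom), be placed in a common $W \subseteq U \cap V$, where the inclusion-functor relations identify its images in $\Ent(U)$ and $\Ent(V)$; so all subdivisions give the same colimit morphism. Second, invariance under the equivalence $\approx$: it suffices to treat a single elementary relation $\alpha \rightleftharpoons \beta$, and after cancelling the common prefix $\alpha_1 = \beta_1$ and suffix $\alpha_3 = \beta_3$ this reduces to showing that the two sides $\alpha_2$ and $\beta_2$ of a single elementary homotopy $h : \Delta \to X$ define the same colimit morphism. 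The homotopy is supported in one basic open $U_2$, but $U_2$ need not lie in $\Ucat$; to localize it I pull the cover $\Ucat$ back along $h$ to an open cover of $\Delta$ and apply Lemma~\ref{lem:refinement} to obtain a triangulation of $\Delta$, refining its stratification, with every simplex mapping into some element of $\Ucat$. Sweeping the path $v_0 v_1 v_2$ across this triangulated simplex to the path $v_0 v_2$ one triangle at a time, each move is an elementary homotopy whose image lies in a single element of $\Ucat$, hence an $\approx$-relation realized inside some $\Ent(W)$ and therefore a relation in the colimit; chaining the moves gives $\Psi(\alpha_2) = \Psi(\beta_2)$.

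I expect the triangle-by-triangle sweep to be the main obstacle, and it is exactly the step where Treumann's argument failed: one cannot in general strip triangles from the boundary of $\Delta$ while keeping the remaining region a disk. The requirement that an elementary homotopy be supported in a single basic open, together with the freedom of an elementary relation to fix an arbitrary prefix and suffix, is what makes each individual push across a triangle a legitimate move; the care lies in choosing an order of triangles --- for instance a sweep by a generic height function on $\Delta$ --- so that at every stage the path being pushed is still an entrance path and the move across the next triangle is genuinely elementary. Verifying that such an ordering always exists, and that each move respects the stratification (so that the intermediate paths shortcut past at most one stratum and hence remain entrance paths after composing with the stratum-preserving $h$), is the delicate part of the argument.
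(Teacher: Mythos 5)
Your proposal is correct and follows essentially the same route as the paper's proof: well-definedness on objects via the intersection axiom for basic coverings, definition on morphisms via compactness and common refinement of subdivisions, and invariance under elementary homotopies by pulling back $\Ucat$ along $h$ and applying Lemma~\ref{lem:refinement} to triangulate $\Delta$ so that every triangle maps into an element of $\Ucat$. The only difference is packaging --- you build an explicit inverse to the comparison functor out of the colimit, where the paper verifies the universal property directly against an arbitrary co-cone --- and your closing remarks about ordering the triangle-by-triangle sweep concern exactly the step the paper also leaves implicit.
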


\begin{proof}
Let $\{ \Ffunc_i : \Ent(U_i, \Sstrat_i) \to \Ent(X, \Sstrat) \}$ be the set of functors
induced by the inclusion $(U_i, \Sstrat_i) \subseteq (X, \Sstrat)$, over all $U_i \in \Ucat$.
For each pair $U_i \subseteq U_j$, the following diagram commutes in $\Cat$:
	\begin{equation*}
	\xymatrix{
	\Ent(U_i, \Sstrat_i) \ar[rd]_{\Ffunc_i}
	\ar[rr]^{\Ent \big( (U_i , \Sstrat_i) \subseteq (U_j, \Sstrat_j) \big)} &&
	\Ent(U_j, \Sstrat_j) \ar[ld]^{\Ffunc_j} \\
	& \Ent(X, \Sstrat). &
	}
	\end{equation*}
The set of functors $\{ \Ffunc_i \}$ is a co-cone from the diagram
$\Ent |_{\Ucat}$ in $\Cat$.
We aim to show that this co-cone is initial and hence satisfies the universal property of the colimit.
To this end, let
$\{ \Gfunc_i : \Ent(U_i, \Sstrat_i) \to \Ccat  \}$
be any second co-cone from $\Ent |_{\Ucat}$.
We need to show there is a unique functor $\Qfunc : \Ent(X, \Sstrat) \to \Ccat$ such that for each $U_i \in \Ucat$, the following diagram commutes:
	\begin{equation*}
	\xymatrix{
	& \Ent(U_i , \Sstrat_i ) \ar[dl]_{\Ffunc_i} \ar[dr]^{\Gfunc_i} & \\
	\Ent(X, \Sstrat) \ar[rr]_{\Qfunc} & & \Ccat.
	}
	\end{equation*}

If such a functor $\Qfunc$ exists, then $\Qfunc(x)$ must be $\Gfunc_i(x)$
for every $U_i \in \Ucat$ containing $x$.
If $x \in U_i \cap U_j$, then there is
a $U_k \in \Ucat$ such that $x \in U_k \subseteq U_i \cap U_j$.
We have $\Gfunc_i(x) = \Gfunc_k(x) = \Gfunc_j(x)$.
For any $x \in X$, let $\Qfunc(x) := \Gfunc_i(x)$ where $U_i$
is any open set in $\Ucat$ containing $x$.

We now define $\Qfunc$ on the morphisms of $\Ent(X, \Sstrat)$.
Suppose an entrance path $\alpha$ lies entirely in an element $U_i \in \Ucat$.
Then let $\Qfunc(\alpha) := \Gfunc_i(\alpha)$.
Now consider an arbitrary entrance path $\alpha$.
By compactness of $\Ispace$, $\alpha$ can be written as a
finite composition $\alpha = \alpha_n \ast \cdots \ast \alpha_1$,
where each $\alpha_i$ lies in some element of $\Ucat$.
Let $\Qfunc(\alpha) := \Qfunc(\alpha_n) \circ \cdots \circ \Qfunc(\alpha_1)$.
This assignment is independent of the refinement of $\alpha$ as any two refinements
have a common refinement.

We now check that $\Qfunc$ sends an equivalence class of entrance paths
to the same morphism.
It suffices to check that for every elementary homotopy
$\alpha \Rightarrow \beta$, we have $\Qfunc(\alpha) = \Qfunc(\beta)$; see Definition~\ref{defn:elementary_homotopy} for a reminder.
Let $x = \alpha(0) = \beta(0)$ and $y = \alpha(1) = \beta(1)$.
Recall the map $h : \Delta \to (X, \Sstrat)$ in the definition of an elementary
homotopy.
One of the requirements on $h$ is that the point $y = h(v_2)$ has a basic open
neighborhood $U$ that contains the image of $h$.
Suppose $U \subseteq U_i$ for some $U_i \in \Ucat$.
Then $\alpha = \beta$ in $\Ent(U_i, \Sstrat_i)$ implying
$\Qfunc(\alpha) = \Qfunc(\beta)$.
Suppose there is no $U_i \notin \Ucat$ such that $U \subseteq U_i$.
Recall every basic open is locally cone-like and can be shrunk to fit inside any cover element
$U_i \in \Ucat$.
This means that there is a complete covering by basic opens $\mathcal{B}$ of $X$ such that for all
$B \in \mathcal{B}$ there exists a $U_i \in \Ucat$ such that $B \subseteq U_i$.
By Lemma \ref{lem:refinement}, we may subdivide $\Delta$ so that every triangle
is an elementary homotopy.
We may now write $\alpha \Rightarrow \beta$ as a finite sequence
of elementary relations.
We have $\Ffunc(\alpha) = \Ffunc(\beta)$.
\end{proof}

\subsection{Cosheaves of Representations}
\label{subsec:cosheaf-of-reps}

In this section we show that representations of the entrance path category, as well as their colimits, form cosheaves in a natural way.
First we introduce the category of representations of a stratified space $(X,\Sstrat)$.

\begin{defn}
Associated to each stratified space $(X, \Sstrat)$ is a category of representations valued in $\Omega$, written $\calR_{\Omega}(X,\Sstrat)$.
The objects and morphisms are as follows:
\begin{itemize}
	\item An object in $\calR_{\Omega}(X,\Sstrat)$ consists of a pair $(U,\Ffunc)$ where $U$ is an open set in $X$ and $\Ffunc : \Ent(U, \Sstrat_U) \to \Omega$ is a representation of the restriction of the entrance path category of $X$ to $U$.
	\item A morphism in $\calR_{\Omega}(X,\Sstrat)$ from $(U,\Ffunc)$ to $(V,\Gfunc)$ consists of the inclusion relation $U\subseteq V$ as well as a natural transformation $\Ffunc \Rightarrow \Gfunc|_{\Ent(U,\Sstrat_U)}$.
\end{itemize}
We write $\calR$ when $\Omega$ and $(X,\Sstrat)$ are clear from context.
\end{defn}

In the following definition and many of the upcoming proofs, we will adopt the convention that when referring to a representation of an entrance path category $\Gfunc$, the notation $\Gfunc|_U$ will stand as shorthand for $\Gfunc$ restricted to the subcategory $\Ent(U,\Sstrat_U)$, i.e.~$\Gfunc|_{\Ent(U,\Sstrat_U)}$.

\begin{defn}\label{defn:consistent-reps}
We say a functor $\Qfunc: \Open(X) \to \calR$ is a \define{consistent assignment of representations} if whenever $U\subseteq V$, the induced morphism $\Qfunc(U \subseteq V)$ from
$\Qfunc(U) = (U, \Ffunc)$ to $\Qfunc(V) = (V, \Gfunc)$ is the identity natural transformation
$\id: \Ffunc \Rightarrow \Gfunc |_U$ in the second slot.
\end{defn}

\begin{lem}\label{lem:cosheaf-of-reps}
If $\Qfunc: \Open(X) \to \calR$ is a consistent assignment of representations, then $\Qfunc$ is a cosheaf valued in $\calR$.
\end{lem}
\begin{proof}
First we note that $\calR$ has small colimits because it has coequalizers and coproducts;
the existence of coequalizers follows because pointwise the coequalizer of two natural transformations is a coequalizer in $\Omega$, which is a bi-complete category.

We will check the cosheaf axiom for just one open set, namely $X$,
because we can always repeat the argument below for an arbitrary open set.
Let $\cU$ be a complete cover of $X$.
First, note that any co-cone to $\Qfunc|_{\cU}$ defines, by virtue of Theorem~\ref{thm:vanKampen},
a representation of $\Ent(X,\Sstrat)$ in $\Omega$.
Given any co-cone $\Gfunc: \Ent(X,\Sstrat) \to \Omega$ to $\Qfunc|_{\cU}$, we must prove there exists
a unique natural transformation of functors $\eta: \Qfunc(X) \Rightarrow \Gfunc$.
Note that saying $\Gfunc$ is a co-cone to $\Qfunc|_{\cU}$ means that for each cover element $U\in \cU$ we have a natural transformation $\eta_U: \Qfunc(U) \Rightarrow \Gfunc|_U$.
We define a natural transformation $\eta:\Qfunc(X) \to \Gfunc$ pointwise by setting $\eta(x)=\eta_U(x)$ for any $U$ containing $x$.
Notice that if $x\in V\subseteq U$, then the hypothesis that $\Qfunc(U)|_V=\Qfunc(V)$ implies that $\eta_V(x)=\eta_U(x)$, so this definition is independent of the cover element chosen.

To prove that $\eta$ is actually a natural transformation, we have to prove that the following
diagram commutes for all equivalence classes of entrance paths $\alpha$ in $\Ent(X, \Sstrat)$:
	\begin{equation*}
	\begin{tikzcd}
	\Qfunc(X)(\alpha(0)) \ar[rr, "\eta(\alpha(0))"] \ar[d, "\Qfunc(X)(\alpha)"] && \Gfunc(\alpha(0))
	\ar[d, "\Gfunc(\alpha)"] \\
	\Qfunc(X)(\alpha(1)) \ar[rr, "\eta(\alpha(1))"] && \Gfunc(\alpha(1)).
	\end{tikzcd}
	\end{equation*}
This follows by an argument analogous to one used in the proof of Theorem~\ref{thm:vanKampen}.
We replace an entrance path $\alpha:x \to y$ with a composition of entrance paths $\alpha_n\ast \cdots \ast \alpha_1$, where each $\alpha_i$ is contained in some element $U_i$ of the cover $\cU$.
Since each $\eta_{U_i}$ is a natural transformation, each of the subsquares in the following diagram commutes.
\begin{equation*}
\begin{tikzcd}
\Qfunc(X)(x) \arrow[r, "\Qfunc(\alpha_1)"] \arrow[d,"\eta_{U_1}(x)"'] & \Qfunc(X)(x_1) \arrow[r] \arrow[d, "\eta_{U_1}(x_1)"'] & \cdots \arrow[r] & \Qfunc(X)(x_{n-1})  \arrow[r, "\Qfunc(\alpha_n)"] \arrow[d, "\eta_{U_n}(x_{n-1})"] & \Qfunc(X)(y) \arrow[d, "\eta_{U_n}(y)"] \\
\Gfunc(x) \arrow[r, "\Gfunc(\alpha_1)"] & \Gfunc(x_1) \arrow[r] & \cdots \arrow[r] & \Gfunc(x_{n-1}) \arrow[r, "\Gfunc(\alpha_n)"]
& \Gfunc(y)
\end{tikzcd}
\end{equation*}
This proves that $\eta:\Qfunc(X) \Rightarrow \Gfunc$ is a natural transformation.
Uniqueness follows from the fact that the natural transformation $\eta$ was defined using the natural transformations used in the co-cone from $\Qfunc|_{\cU}$ to $\Gfunc$.
Any other natural transformation must restrict to this same collection of natural transformations, which implies uniqueness.
This completes the proof.
\end{proof}

We now show that if we have a consistent assignment of representations, then replacing each representation with its colimit defines a cosheaf valued in $\Omega$.
To do this we introduce some more notation and a preparatory lemma.

We can define a functor $\Lfunc : \calR \to \Omega$ as follows:
To each representation $\Ffunc : \Ent(U, \Sstrat_U) \to \Omega$, let $\Lfunc (U, \Ffunc) := \colim \Ffunc$.
To define $\Lfunc$ on morphisms we use functoriality of colimits, which we review briefly in this special case.
Recall that each morphism $(U, \Ffunc) \to (V, \Gfunc)$ in $\calR$ specifies a natural transformation $\eta: \Ffunc \Rightarrow \Gfunc|_U$.
The colimit provides a natural transformation $\Delta_{\Gfunc}$ from $\Gfunc$ to the constant diagram with value $\colim \Gfunc$.
We can restrict the domain of definition of the natural transformation $\Delta_{\Gfunc}$ to define a natural transformation from $\Gfunc|_U$ to $\colim \Gfunc$.
We can then pre-compose this restricted natural transformation with $\eta$ to make $\colim \Gfunc$ a co-cone to $\Ffunc$.
The universal property of $\colim \Ffunc$ then provides a unique morphism to $\colim \Gfunc$ restricted along $\eta$.
We define $\Lfunc (U, \Ffunc) \to \Lfunc(V, \Gfunc)$ to be this unique morphism.

Now consider a functor $\Rfunc : \Omega \to \calR$ in the opposite direction.
For each object $a$ in $\Omega$, let $\Rfunc(a)$ be the $a$-constant representation
$\Afunc: \Ent(X, \Sstrat) \to \Omega$.
That is, $\Afunc(x) = a$ for all points $x$ and $\Afunc(\alpha) = \id$ for all entrance paths $\alpha$.
For every morphism $a \to b$ in $\Omega$, let $\Rfunc(a \to b)$
be the induced natural transformation $\Afunc \Rightarrow \Bfunc$.
We now have the following diagram of categories and functors:
	\begin{equation*}
	\begin{tikzcd}
	\Omega  \arrow[rr, bend left, "\Rfunc"] && \calR \arrow[ll, "\Lfunc", bend left]
	\end{tikzcd}
	\end{equation*}

\begin{lem}\label{lem:left-adjoint}
The functor $\Lfunc$ is left adjoint to $\Rfunc$.
\end{lem}
\begin{proof}
Recall that for any category $\Ccat$, $1_{\Ccat}$ is the identity functor on $\Ccat$.
We construct the co-unit $\epsilon : \Lfunc \circ \Rfunc \Rightarrow 1_\Omega$ of the adjunction as follows.
For an object $a$ in $\Omega$, recall that $\Lfunc \circ \Rfunc (a)$ is the colimit of the $a$-constant
representation $\Afunc : \Ent(X, \Sstrat) \to \Omega$,
which is canonically isomorphic to $a$.
Let $\epsilon(a)$ be the universal morphism from the colimit to $a$.
Note that whenever we have a morphism $a \to b$ in $\Omega$, the following square commutes:
\begin{equation*}
\begin{tikzcd}
\Lfunc \circ \Rfunc (a) \arrow[r] \arrow[d, "\epsilon(a)"'] & \Lfunc \circ \Rfunc (b) \arrow[d, "\epsilon(b)"] \\
a \arrow[r] & b.
\end{tikzcd}
\end{equation*}

Now we turn to the construction of the unit $\eta : 1_\calR \Rightarrow \Rfunc \circ \Lfunc$.
Recall an object of $\calR$ is a pair $(U, \Ffunc)$ where $U \subseteq X$ is an open set
and $\Ffunc : \Ent(U, \Sstrat_U) \to \Omega$ is a functor.
The composition $\Rfunc \circ \Lfunc (U, \Ffunc)$ is the $b$-constant functor
$\Bfunc : \Ent(U, \Sstrat_U) \to \Omega$ where $b$ is the colimit of $\Ffunc$.
Define $\eta(U, \Ffunc)$ to be the natural transformation from $\Ffunc$ to the constant diagram $\Bfunc$ given by the definition of the colimit.
One can easily check the commutativity of the following diagrams:
	\begin{equation*}
	\begin{tikzcd}
	\Lfunc \arrow[r, "\Lfunc \eta", Rightarrow] \arrow[rd, "\id_{\Lfunc}"', Rightarrow]
	& \Lfunc \Rfunc \Lfunc \arrow[d, "\epsilon \Lfunc", Rightarrow]
	&&& \Rfunc \arrow[r, "\eta \Rfunc", Rightarrow] \arrow[rd, "\id_{\Rfunc}"', Rightarrow]&
	\Rfunc \Lfunc \Rfunc \arrow[d, "\Rfunc \epsilon", Rightarrow] \\
	& \Lfunc &&& & \Rfunc
	\end{tikzcd}
	\end{equation*}
\end{proof}

\begin{lem}
\label{lem:adjoint}
Let $\Qfunc : \Open(X) \to \calR$ be a consistent assignment of representations, as defined in Definition~\ref{defn:consistent-reps}.
Let $\Lfunc\circ \Qfunc:\Open(X) \to \Omega$ be the functor that assigns to each open $U$ the colimit of $\Qfunc(U):\Ent(U,\Sstrat_U) \to \Omega$.
The pre-cosheaf $\Lfunc\circ\Qfunc$ is a cosheaf.
\end{lem}
\begin{proof}
Suppose $\cU$ is a complete cover of an open set $U$.
We need to show that the universal morphism
\[
\colim \Lfunc\circ \Qfunc|_{\cU} \to \Lfunc\circ\Qfunc(U)
\]
is an isomorphism.
By Lemma~\ref{lem:left-adjoint}, we know that $\Lfunc$ is a left adjoint and hence commutes with colimits so the map
\[
\colim \Lfunc\circ \Qfunc|_{\cU} \to \Lfunc(\colim \Qfunc|_{\cU})
\]
is an isomorphism.
Moreover, by Lemma~\ref{lem:cosheaf-of-reps} we know that $\colim \Qfunc|_{\cU} \to \Qfunc(U)$
is an isomorphism as well.
Post-composing this map with $\Lfunc$ and using the fact that functors preserve isomorphisms, we have that the desired composition
\[
	\colim \Lfunc\circ \Qfunc|_{\cU} \to \Lfunc(\colim \Qfunc|_{\cU}) \to \Lfunc(\Qfunc(U)) = \Lfunc\circ\Qfunc(U)
\]
is an isomorphism as well.
\end{proof}

%%%%%%%%%%%%%%%%%%%%%%%%%%%%%%%%
\section{Equivalence}
\label{sec:equivalence}
%%%%%%%%%%%%%%%%%%%%%%%%%%%%%%%%
In this section we prove the main equivalence of this paper, which was originally observed by Robert MacPherson.
We recall the basic ingredients of this equivalence.
Given a conically stratified space $(X, \Sstrat)$
we have the notion of an $\Sstrat$-constructible cosheaf given in Definition~\ref{defn:constr-cosheaf} and a representation of the entrance path category given in Definition~\ref{defn:constr-entrance}.
These are the objects of the categories $\Csh_{\Omega}(X,\Sstrat)$ and $\big [ \Ent(X, \Sstrat), \Omega \big ]$.
The main theorem of this paper is that these two categories are equivalent.

\begin{thm}[Classification]
\label{thm:classification}
$\Csh_\Omega(X, \Sstrat)$ is equivalent to $\big [ \Ent(X, \Sstrat), \Omega \big ]$.
\end{thm}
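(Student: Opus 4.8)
The plan is to construct an equivalence of categories by exhibiting two functors
\[
\Phi : \Csh_\Omega(X, \Sstrat) \to \big[ \Ent(X, \Sstrat), \Omega \big]
\qquad\text{and}\qquad
\Psi : \big[ \Ent(X, \Sstrat), \Omega \big] \to \Csh_\Omega(X, \Sstrat),
\]
and then showing that $\Phi\Psi$ and $\Psi\Phi$ are naturally isomorphic to the respective identity functors. The functor $\Phi$ should be the more elementary of the two. Given a constructible cosheaf $\cosheaf{F}$, I would define $\Phi(\cosheaf{F})$ on objects by sending a point $x \in X$ to its costalk $\cosheaf{F}(x)$. On morphisms, an entrance path $\alpha$ from $x$ to $x'$ must be sent to a map $\cosheaf{F}(x) \to \cosheaf{F}(x')$; I would build this by covering $\alpha$ with basic opens (using compactness of $\Ispace$) and composing the isomorphisms $\cosheaf{F}(x \in U)$, their inverses, and the constructibility isomorphisms for nested basic opens associated to a common stratum. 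The monotonicity of stratum-dimension along an entrance path is exactly what guarantees that consecutive basic opens can be chosen so that the relevant transition maps are the isomorphisms supplied by the constructibility axiom. I would then verify that this assignment respects the equivalence relation $\approx$ on entrance paths: it suffices to check invariance under a single elementary homotopy $\alpha \Rightarrow \beta$, where the condition that $h(\Delta)$ lies in a single basic open $U_2$ forces the two composites to agree. Functoriality of $\Phi$ itself follows from naturality of the costalk construction.

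For the reverse functor $\Psi$, given $\Ffunc : \Ent(X, \Sstrat) \to \Omega$ I would define a presheaf-type assignment on basic opens first and then extend by colimit. Since every basic open $U$ associated to a stratum $S$ is contractible onto its associated stratum point and any two points of $S \cap U$ are joined by entrance paths inside $U$ inducing isomorphisms, I would set $\Psi(\Ffunc)(U) := \Ffunc(x)$ for a chosen $x$ in the associated stratum, independent of the choice up to canonical isomorphism. For a general open $U$ I would define $\Psi(\Ffunc)(U) := \colim \Psi(\Ffunc)|_{\Ucat}$ over a basic covering $\Ucat$ of $(U, \Sstrat_U)$, which exists because $\Omega$ has arbitrary colimits. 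The main work is to show this is well defined (independent of the basic covering) and that it satisfies both cosheaf axioms. Constructibility is essentially built in. The gluing axiom is where the Van Kampen theorem (Theorem~\ref{thm:vanKampen}) does the heavy lifting: the colimit defining $\Psi(\Ffunc)(U)$ must be compatible with refinements of coverings, and since $\Ffunc$ factors through $\Ent$ which takes basic coverings to colimit diagrams of categories, the induced colimits in $\Omega$ agree.

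I expect the construction of $\Psi$ and the verification of the gluing axiom to be the main obstacle. The difficulty is twofold: first, defining $\Psi(\Ffunc)(U)$ on an arbitrary open $U$ requires a basic covering, and different coverings must yield canonically isomorphic colimits; second, proving that the universal morphism $\colim \Psi(\Ffunc)|_{\Ucat} \to \Psi(\Ffunc)(U)$ is an isomorphism amounts to transporting the Van Kampen isomorphism $\colim \Ent|_{\Ucat} \xrightarrow{\sim} \Ent(U, \Sstrat_U)$ across the functor $(-) \mapsto [\,\text{colim of }\Ffunc\circ(-)\,]$. Because $\colim$ from a category into $\Omega$ commutes with colimits of the indexing diagrams, the isomorphism of colimit categories furnished by Van Kampen should descend to the required isomorphism in $\Omega$; making this compatibility precise, including the cofiltered-limit handling of costalks, is the technically delicate step. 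Once both functors are in hand, the two natural isomorphisms are comparatively routine: $\Phi\Psi \cong \id$ follows because the costalk of $\Psi(\Ffunc)$ at $x$ recovers $\Ffunc(x)$ via the basic open neighborhoods of $x$, and $\Psi\Phi \cong \id$ follows from the gluing axiom applied to $\cosheaf{F}$ together with the fact that $\Phi(\cosheaf{F})$ reproduces the costalks on basic opens.
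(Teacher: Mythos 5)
Your proposal follows the paper's proof in all essentials: the forward functor records costalks and is defined on morphisms via the single basic open $U_2$ of an elementary homotopy, the reverse functor is built from colimits over entrance-path categories with the gluing axiom supplied exactly by Theorem~\ref{thm:vanKampen}, and the unit and counit are the same universal-property arguments exploiting the conical structure of basic opens. The only cosmetic difference is that the paper sets $\Jffunc(\Ffunc)(U)=\colim \Ffunc|_{\Ent(U,\Sstrat_U)}$ uniformly for every open $U$, which sidesteps the covering-independence issue you flag, whereas you define the value on a basic open as $\Ffunc(x)$ (canonically the same object) and extend by colimits over basic coverings.
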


\begin{proof}
Recall that the category $\Csh_\Omega (X, \Sstrat)$ is \emph{equivalent}
\footnote{Some authors say two categories are equivalent if there is a single functor between them that is fully faithful and essentially surjective. The definition of equivalence we use follows the one given in Riehl's \emph{Category Theory in Context}, see~\cite[Def.~1.5.4]{riehl2017category}. This notion of equivalence resembles the notion of an {\em adjoint equivalence}, but it does not require the extra triangle identities that an adjoint equivalence needs to satisfy. However, this is fine because~\cite[Prop.~4.4.5]{riehl2017category} proves that every equivalence in our sense can be promoted to an adjoint equivalence.} to
the category $\big [ \Ent(X, \Sstrat), \Omega \big ]$ if there are functors
$$\Iffunc : \Csh_\Omega(X, \Sstrat) \to \big [ \Ent(X, \Sstrat), \Omega \big ]$$
$$\Jffunc : \big [ \Ent(X, \Sstrat), \Omega \big ] \to \Csh_\Omega(X, \Sstrat)$$
and natural isomorphisms
\begin{align*}
\epsilon:\Jffunc\circ\Iffunc \Rightarrow \id_{\Csh_\Omega(X, \Sstrat)} &&
\eta: \id_{[ \Ent(X, \Sstrat), \Omega]} \Rightarrow \Iffunc\circ\Jffunc.
\end{align*}
The natural transformation $\epsilon$ is called the co-unit and
the natural transformation $\eta$ is the unit.
We now construct this equivalence.

\paragraph{Construction of $\Iffunc$.}
We start with $\Iffunc$.
Fix an $\Sstrat$-constructible cosheaf $\cosheaf{F}$.
Recall that $\cosheaf{F}(x)$ is the costalk of $\cosheaf{F}$ at the point $x \in X$.
On objects $x \in \Ent(X,\Sstrat)$, let
$$\Iffunc(\cosheaf{F})(x) := \cosheaf{F}(x).$$
To describe how $\Iffunc(\Ffunc)$ acts on an entrance path $\alpha: x \to y$, we first describe how it acts on an entrance path $\alpha: (\Ispace,\Tstrat) \to (X,\Sstrat)$ that intersects only two strata in the following way:
\[
	\exists \, S_0, S_1\in \Sstrat \qquad \text{s.t.} \qquad \alpha([0,1))\subseteq S_0 \qquad \text{and} \qquad \alpha(1)=y\in S_1.
\]
We further require that there is a basic open $U$ whose cone point is $\alpha(1)=y$ that contains $\alpha(0)=x$. Note that we allow the possibility that $S_0=S_1$.
% Let $\alpha : x \to y$ be an entrance path with the property that there is a basic open neighborhood
% $U \subseteq X$ of $y$ containing $x$.
In either case, the diagram of costalk maps
\[
	\begin{tikzcd}
	\cosheaf{F}(x) \arrow[r] & \cosheaf{F}(U) & \cosheaf{F}(y) \arrow[l,"\cong"]
	\end{tikzcd}
\]
allows us to define $\Iffunc(\cosheaf{F})(\alpha)$ as $\Ffunc^{-1}(y\in U)\circ \Ffunc(x\in U)$.
The morphism $\Iffunc(\cosheaf{F})(\alpha)$ is independent of the choice of $U$.
This is because for any other basic open neighborhood $V$ of $y$ containing $x$
where $U \subseteq V$, the following diagram commutes:
\[
	\begin{tikzcd}
	& \cosheaf{F}(V) & \\
	\cosheaf{F}(x) \arrow[ru] \arrow[rd] &  & \cosheaf{F}(y) \arrow[lu,"\cong"'] \arrow[ld,"\cong"] \\
	& \cosheaf{F}(U). \arrow[uu, "\Ffunc(U \subseteq V)", "\cong"'] &
	\end{tikzcd}
\]
Finally, we note that an arbitrary entrance path is equivalent to a finite composition of entrance paths with the above properties.
The morphism for this arbitrary entrance path will be a zig-zag of costalk maps, where left-pointing arrows are isomorphisms.

To show that this definition is invariant under our equivalence relation it suffices to check invariance under an elementary homotopy.
This will also show that when the composition of entrance paths is related to another entrance path by an elementary homotopy, then the above construction is well-defined.
Consider an elementary homotopy $\alpha \Rightarrow \beta$
and recall the associated map $h : \Delta \to (X, \Sstrat)$ from
Definition~\ref{defn:elementary_homotopy}.
Note that there is a basic open
$U_2 \in \Basic(X, \Sstrat)$ with cone point $z:=h(v_2)$ that contains the entire image of $h$.
Additionally, we can guarantee the existence of a basic open $U_1\subseteq U_2$ with cone point $y:=h(v_1)$.
Finally, let $x=h(v_0)$.
In this setting, $\beta: x \to z$ is an entrance path with the property mentioned above: it only intersects two strata and has a basic open around its endpoint.
Following the construction given above, we define $\Iffunc(\cosheaf{F})(\beta)$ as the following composition:
\begin{equation*}
	\xymatrix{
	\cosheaf{F}(\beta(0)) \ar[rr]^-{\cosheaf{F}(x \in U_2)}
	&& \cosheaf{F}(U_2) \ar[rrr]^-{\cosheaf{F}^{-1}(z \in U_2)}_-\cong &&&  \cosheaf{F}(\beta(1)).
	}
\end{equation*}
On the other hand, $\alpha$ can be viewed as the composition of two entrance paths $\alpha_1:x \to y$ and $\alpha_2: y \to z$ with the above properties.
Our construction above tells us to define $\Iffunc(\cosheaf{F})(\alpha):=\Iffunc(\cosheaf{F})(\alpha_2)\circ \Iffunc(\cosheaf{F})(\alpha_1)$, which is the following composition of morphisms in $\Omega$:
\[
	\xymatrix{
	\cosheaf{F}(\alpha(0)) \ar[rr]^-{\cosheaf{F}(x \in U_1)} && \cosheaf{F}(U_1) \ar[rr]^-{\cosheaf{F}^{-1}(y \in U_1)} && \cosheaf{F}(\alpha(1/2)) \ar[rr]^-{\cosheaf{F}(y \in U_2)} && \cosheaf{F}(U_2) \ar[rr]^-{\cosheaf{F}^{-1}(z \in U_2)} && \cosheaf{F}(\alpha(1)).
	}
\]
However since the co-stalk $\cosheaf{F}(\alpha(1/2))$ defines a universal cone to $\cosheaf{F}$ restricted to the collection of open sets containing $\alpha(1/2)=y$, the middle two arrows cancel to yield
\[
	\xymatrix{
	\cosheaf{F}(\alpha(0)) \ar[rr]^-{\cosheaf{F}(x \in U_1)} && \cosheaf{F}(U_1) \ar[rr]^-{\cosheaf{F}(U_1 \subseteq U_2)} && \cosheaf{F}(U_2) \ar[rr]^-{\cosheaf{F}^{-1}(z \in U_2)} && \cosheaf{F}(\alpha(1)).
	}
\]
This composition further reduces to
\[
	\xymatrix{
	\cosheaf{F}(\alpha(0)) \ar[rr]^-{\cosheaf{F}(x \in U_2)} && \cosheaf{F}(U_2) \ar[rr]^-{\cosheaf{F}^{-1}(z \in U_2)} && \cosheaf{F}(\alpha(1)),
	}
\]
which is exactly the definition of $\Iffunc(\cosheaf{F})(\beta)$.
Thus we have just shown that
$\Iffunc (\cosheaf{F})(\alpha) = \Iffunc (\cosheaf{F})(\beta)$
whenever $\alpha \Rightarrow \beta$.
Repeating this argument finitely many times shows that for any two entrance paths $\alpha$ and $\beta$ such that $\alpha \approx \beta$, we have $\Iffunc (\cosheaf{F})(\alpha) = \Iffunc ( \cosheaf{F})(\beta)$.

A cosheaf morphism $\phi : \cosheaf{F} \to \cosheaf{G}$ induces
a morphism of representations
$\Iffunc(\phi) : \Iffunc(\cosheaf{F}) \to \Iffunc (\cosheaf{G})$
by virtue of the fact that a map of cosheaves is a natural transformation indexed by open sets of $X$, which in turn induces maps on the level of costalks.
To check that that the morphism $\Iffunc(\phi)$ is a natural transformation of representations, we observe that the morphism associated to an entrance path is determined by a cover of the entrance path by basic opens, where $\phi$ already defines a natural transformation.
Because $\cosheaf{F}$ and $\cosheaf{G}$ are functors on $\Open(X)$, we can always refine two choices of covers of an entrance path to a common third one to guarantee that this map of representations is well-defined.

\paragraph{Construction of $\Jfunc$.}
Now we construct $\Jfunc$.
Fix a representation $\Ffunc : \Ent(X, \Sstrat) \to \Omega$.
Following the notation in Section~\ref{subsec:cosheaf-of-reps}, we can associate to $\Ffunc$ a consistent assignment of representations $\Qfunc:\Open(X) \to \calR$ that assigns to each open set $U$, the object $(U,\Ffunc|_{\Ent(U, \Sstrat_U)})$.
Recall that $\Lfunc:\calR \to \Omega$ assigns to each representation its corresponding colimit.
Define $\Jffunc(\Ffunc)=\Lfunc\circ \Qfunc$.
This means that to each open $U \subseteq X$
\[
\Jffunc(\Ffunc)(U) := \colim \Ffunc |_{\Ent(U, \Sstrat_U)}
\]
and to each pair of open sets $U \subseteq V$,
$$\Jffunc(\Ffunc)(U \subseteq V) : \Jffunc(\Ffunc)(U) \to \Jffunc(\Ffunc)(V)$$
is the universal morphism between the two colimits.
We already proved in Lemma~\ref{lem:adjoint} that $\Jffunc(\Ffunc)=\Lfunc \circ \Qfunc$ satisfies the cosheaf axiom.
To check constructibility, let $U \subseteq V$ be a pair in $\Basic(X, \Sstrat)$ associated to some common stratum $S \in \Sstrat$.
Choose a point $x \in U \cap V \cap S$.
By Proposition~\ref{prop:terminal-object}, $x$ is terminal in both $\Ent(U, \Sstrat_U)$ and $\Ent(V, \Sstrat_V)$.
Since the colimit of a diagram with a terminal object is just the value given to the terminal object, we get the following isomorphisms along with a morphism induced by inclusion:
	\begin{equation*}
	\begin{tikzcd}
	\Ffunc(x) \cong \colim \Ffunc|_{\Ent(U,\Sstrat_{U})} \ar[rr, "\cong"] &&
	\colim \Ffunc|_{\Ent(V,\Sstrat_{V})} \cong \Ffunc(x).
	\end{tikzcd}
	\end{equation*}

A morphism of representations $\phi: \Ffunc \to \Gfunc$ of $\Ent(X,\Sstrat)$ is a natural transformation of functors, which is defined pointwise for $x\in X$.
Consequently, any morphism $\phi$ restricts to a morphism $\phi|_U:\Ffunc|_{\Ent(U,\Sstrat_U)} \to \Gfunc|_{\Ent(U,\Sstrat_U)}$ of representations of $\Ent(U,\Sstrat_U)$, which in turn specifies a morphism between the associated colimits, i.e. $\Jffunc(\Ffunc)(U) \to \Jffunc(\Gfunc)(U)$.
Finally, it is easy to see that for a pair of open sets $U\subseteq V$ the restriction of $\phi|_V$ to $\Ent(U,\Sstrat_U)$ agrees with $\phi|_U$, so we have the following commutative square
\begin{equation*}
	\xymatrix{
	\Jffunc(\cosheaf{F})(V) \ar[rr]
	&& \Jffunc(\cosheaf{G})(V) \\
	\Jffunc(\cosheaf{F})(U) \ar[rr] \ar[u] && \Jffunc(\cosheaf{G})(U) \ar[u]
	}
\end{equation*}
thereby implying that a morphism of representations is sent to a morphism of cosheaves.

\paragraph{Construction of $\epsilon$.}
We now construct the co-unit $\epsilon:\Jffunc\circ\Iffunc \Rightarrow \id_{\Csh_\Omega(X, \Sstrat)}$.
Fix a constructible cosheaf $\cosheaf{F}$.
Recall that the associated representation $\Iffunc(\cosheaf{F})$ of the entrance path category is turned back into a cosheaf by taking the colimit of the restriction of this representation to each open set.
That is to say
\[
	\Jffunc\circ\Iffunc(\cosheaf{F})(U):=\colim \Iffunc(\cosheaf{F})|_{\Ent(U, \Sstrat_U)}.
\]
To specify the co-unit $\epsilon(\cosheaf{F})(U)$, we note that for each $x\in U$, $\Iffunc(\cosheaf{F})(x)$ is defined as the co-stalk of $\cosheaf{F}$ at $x$, which provides a morphism $\Iffunc(\cosheaf{F})(x) \to \cosheaf{F}(U)$.
We claim that these morphisms collate to form a co-cone $\chi_U$ to the restricted representation $\Iffunc(\cosheaf{F})|_{\Ent(U, \Sstrat_U)}$, which we write as $\Iffunc(\cosheaf{F})|_{U}$ for short.
The fact that the costalk maps form a co-cone $\chi_U$ follows from the observation that the morphism $\Iffunc(\cosheaf{F})(\alpha)$ associated to any entrance path $\alpha$ in $\Ent(U, \Sstrat_U)$ is defined using a zig-zag of costalk maps involving basic opens, each of which can be chosen to fit inside of $U$.
Since $\cosheaf{F}(U)$ forms a co-cone to $\Iffunc(\cosheaf{F})|_{U}$, we get the unique morphism from the colimit
\[
\Jffunc\circ\Iffunc(\cosheaf{F})(U):=\colim \Iffunc(\cosheaf{F})|_{\Ent(U, \Sstrat_U)} \to \cosheaf{F}(U).
\]
This unique morphism defines our co-unit $\epsilon(\cosheaf{F})(U)$ for each open set $U$.
In order to show that this assignment of morphisms to each open set collectively defines a morphism of cosheaves
$\epsilon(\cosheaf{F}):\Iffunc\circ\Jffunc (\cosheaf{F}) \to \cosheaf{F}$
we must show that for any pair of open sets $U \subseteq V$, the following diagram commutes:
	\begin{equation*}
	\xymatrix{
	\Jffunc \circ \Iffunc (\cosheaf{F})(U) \ar[rr]^{\Jffunc \circ
	\Iffunc (\cosheaf{F})(U \subseteq V)} \ar[d]_{\epsilon(\cosheaf{F})(U)}
	&& \Jffunc \circ \Iffunc (\cosheaf{F})(V) \ar[d]^{\epsilon(\cosheaf{F})(V)} \\
	\cosheaf{F}(U) \ar[rr]^{\cosheaf{F}(U \subseteq V)} && \cosheaf{F}(V).
	}
	\end{equation*}
In order to prove that the above square commutes, we introduce some additional notation and describe the construction of $\epsilon$ in more detail.
The high-level structure of the proof is that since $\Jffunc \circ \Iffunc (\cosheaf{F})(U)$ is defined in terms of a colimit, we want to use uniqueness of the induced morphism to $\cosheaf{F}(V)$ to prove commutativity, but this requires showing that the two ways of going around the square can be used to define the same co-cone to $\Iffunc(\cosheaf{F})|_U$.

%requires some additional notation and a recollection of how, precisely, restrictions of representations define the induced morphisms between colimits. Details are provided below, but the reader may want to see the second paragraph above Lemma~\ref{lem:left-adjoint} for more context.
Recall that a co-cone is just a natural transformation of functors, where the the codomain of the transformation is a functor with constant value.
Let $\Delta_U:\Iffunc(\cosheaf{F})|_U \Rightarrow \Jffunc \circ \Iffunc (\cosheaf{F})(U):=\colim \Iffunc(\cosheaf{F})|_U$ denote the co-cone (natural transformation) from the representation $\Iffunc(\cosheaf{F})|_U$ to the constant representation with value the colimit of $\Iffunc(\cosheaf{F})|_U$.
Let $\chi_U:\Iffunc(\cosheaf{F})|_U \Rightarrow \cosheaf{F}(U)$ be the co-cone described above, which uses the co-stalk maps $\{\cosheaf{F}(x) \to \cosheaf{F}(U)\}_{x\in U}$.
Define $\Delta_V$ and $\chi_V$ similarly by replacing every occurrence of $U$ in the past two sentences with $V$.
By construction, we have that
\[
	\epsilon(\cosheaf{F})(U) \circ \Delta_U =\chi_U \qquad \text{and} \qquad \epsilon(\cosheaf{F})(V) \circ \Delta_V =\chi_V.
\]
Recall that the morphism $\Jffunc\circ\Iffunc(\cosheaf{F})(U\subseteq V)$ is defined by taking the co-cone $\Delta_V: \Iffunc(\cosheaf{F})|_V \Rightarrow \colim \Iffunc(\cosheaf{F})|_V$ and restricting the domain of definition to $\Ent(U,\Sstrat_U)$ to get a new co-cone $\Delta_V|_U : \Iffunc(\cosheaf{F})|_U \Rightarrow \colim \Iffunc(\cosheaf{F})|_V$.
The morphism $\Jffunc\circ\Iffunc(\cosheaf{F})(U\subseteq V)$ is the unique morphism from $\colim \Iffunc(\cosheaf{F})|_U$ to $\colim \Iffunc(\cosheaf{F})|_V$ satisfying
\[
	\left(\Jffunc\circ\Iffunc(\cosheaf{F})(U\subseteq V)\right) \circ \Delta_U = \Delta_V|_U.
\]
Finally, we note that post-composing the natural transformation $\chi_U$ with the morphism $\cosheaf{F}(U\subseteq V)$ can be identified with the restricted natural transformation $\chi_V|_U$, i.e.
$$\cosheaf{F}(U\subseteq V)\circ \chi_U = \chi_V|_U.$$
We now use the above identities to show that
\begin{eqnarray*}
	\epsilon(\cosheaf{F})(V)\circ \left(\Jffunc\circ\Iffunc(\cosheaf{F})(U\subseteq V)\right) \circ \Delta_U &=& \epsilon(\cosheaf{F})(V) \circ \Delta_V|_U \\
	& = & \chi_V|_U \\
	& = & \cosheaf{F}(U\subseteq V) \circ \chi_U \\
	& = & \cosheaf{F}(U\subseteq V) \circ \epsilon(\cosheaf{F})(U) \circ \Delta_U
\end{eqnarray*}
This almost shows what we want, except for the factor $\Delta_U$, but the identity above is crucial because we have two ways of writing the same co-cone from $\Iffunc(\cosheaf{F})|_U$ to $\cosheaf{F}(V)$.
By universal properties, we have a \emph{unique} morphism $\psi_{U,V}:\colim \Iffunc(\cosheaf{F})|_U =: \Jffunc \circ \Iffunc(\cosheaf{F})(U) \to \cosheaf{F}(V)$ satisfying
\[
	\cosheaf{F}(U\subseteq V) \circ \epsilon(\cosheaf{F})(U) \circ \Delta_U = \psi_{U,V} \circ \Delta_U = \epsilon(\cosheaf{F})(V)\circ \left(\Jffunc\circ\Iffunc(\cosheaf{F})(U\subseteq V)\right) \circ \Delta_U.
\]
Uniqueness of $\psi_{U,V}$ implies the final, desired equality:
$$\cosheaf{F}(U\subseteq V) \circ \epsilon(\cosheaf{F})(U)= \psi_{U,V} = \epsilon(\cosheaf{F})(V)\circ \left(\Jffunc\circ\Iffunc(\cosheaf{F})(U\subseteq V)\right).$$

All of the above arguments are natural, meaning that for every cosheaf morphism $\phi : \cosheaf{F} \to \cosheaf{G}$, the cosheaf morphisms given by $\epsilon$ make the following diagram commute
	\begin{equation*}
	\xymatrix{
	\Jffunc \circ \Iffunc(\cosheaf{F}) \ar[d]_{\epsilon(\cosheaf{F})}
	\ar[rr]^{\Jffunc \circ \Iffunc( \phi )}
	&& \Jffunc \circ \Iffunc (\cosheaf{G}) \ar[d]^{\epsilon(\cosheaf{G})} \\
	\cosheaf{F} \ar[rr]^{\phi} && \cosheaf{G}.
	}
	\end{equation*}
In other words $\epsilon:\Jffunc\circ\Iffunc \Rightarrow \id_{\Csh_\Omega(X, \Sstrat)}$ is an actual natural transformation between functors defined on the category of constructible cosheaves.

All that remains to be shown is that for each $\cosheaf{F}$, the co-unit $\epsilon(\cosheaf{F})$ is an isomorphism of cosheaves.
We first show that for each \emph{basic} open $U$, $\epsilon(\cosheaf{F})(U)$ is an isomorphism, we then invoke the cosheaf axiom to show this implies that $\epsilon(\cosheaf{F})(U)$ is an isomorphism on arbitrary open sets.
First, if $U$ is a basic open associated to a stratum $S$, then Proposition~\ref{prop:terminal-object} shows that any $x\in U\cap S$ is terminal in $\Ent(U,\Sstrat|_U)$.
It is an easy exercise to show that if an indexing category for a functor has a terminal object, then the colimit of that functor is just the value of the functor on the terminal object.
This implies that for any $x\in U\cap S$, the morphism $\Iffunc(\cosheaf{F})(x) \to \Jffunc \circ \Iffunc (\cosheaf{F})(U)$ is an isomorphism.
Constructibility of $\cosheaf{F}$ implies we have the following commutative triangle:
\[
	\xymatrix{\Jffunc \circ \Iffunc (\cosheaf{F})(U) \ar[rr]^{\epsilon(\cosheaf{F})(U)} && \cosheaf{F}(U) \\
	& \Iffunc(\cosheaf{F})(x) \ar[lu]^{\cong} \ar[ru]_{\cong} &}
\]
This implies the top horizontal arrow is an isomorphism as well and more generally that the restricted natural transformation
\[
\epsilon(\cosheaf{F})|_{\Basic(U,\Sstrat_U)} : \Jffunc\circ\Iffunc(\cosheaf{F})|_{\Basic(U,\Sstrat_U)} \Rightarrow \cosheaf{F}|_{\Basic(U,\Sstrat_U)}
\]
is a natural isomorphism.
Finally, for an arbitrary open set $U$, the collection of basic opens inside of $U$ forms a complete cover of $U$.
The cosheaf axiom (Definition~\ref{defn:cosheaf}) implies that the vertical arrows below are isomorphisms.
	\begin{equation*}
	\xymatrix{
	\Jffunc \circ \Iffunc (\cosheaf{F})(U) \ar[rr]^{\epsilon(\cosheaf{F})(U)}
	&& \cosheaf{F}(U) \\
	\colim \Jffunc\circ\Iffunc(\cosheaf{F})|_{\Basic(U,\Sstrat_U)} \ar[u]^{\cong} \ar[rr]_{\cong} && \colim \cosheaf{F}|_{\Basic(U,\Sstrat_U)} \ar[u]_{\cong}
	}
	\end{equation*}
Since the restriction of $\epsilon(F)$ to $\Basic(U,\Sstrat_U)$ defines a natural isomorphism, the induced map on colimits is also an isomorphism, which proves the bottom horizontal arrow is an isomorphism.
This proves that $\epsilon(\cosheaf{F})(U)$ is an isomorphism for an arbitrary open set $U$ and hence that $\epsilon(\cosheaf{F}): \Jffunc \circ \Iffunc (\cosheaf{F}) \to \cosheaf{F}$ is a natural isomorphism.

\paragraph{Construction of $\eta$.}
We now define the unit $\eta$ on points of $X$ and elementary homotopies.
Fix a functor $\Ffunc: \Ent(X, \Sstrat) \to \Omega$.
Recall that $\Jffunc(\Ffunc)$ is a constructible cosheaf.
$\Iffunc \circ \Jffunc (\Ffunc)(x)$ is then defined as the inverse limit of $\Jffunc(\Ffunc)$ over all open sets containing $x$.
Note that the collection of \emph{basic opens} containing $x$ forms an initial (also called final) subsequence, so restricting the limit to basic opens yields an isomorphic inverse limit.
Let $U$ be any basic open neighborhood of $x$.
As we saw in the construction of $\epsilon$ above,
Proposition~\ref{prop:terminal-object}
guarantees that the morphism
$z_U: \Ffunc(x) \to \colim \Ffunc|_{\Ent(U, \Sstrat_U)}=:\Jffunc(\Ffunc)(U)$
used in that universal co-cone is an isomorphism.
% $z_U : \Jffunc(\Ffunc)(U):=\colim \Ffunc|_{\Ent(U, \Sstrat_U)} \to \Ffunc(x)$ that is also an isomorphism.
This means that the collection of arrows $\{z_U : \Ffunc(x) \to \Jffunc(\Ffunc)(U)\}$ ranging over all basic opens $U$ containing $x$ defines a limit cone to the restriction of $\Jffunc(\Ffunc)$ to basic opens containing $x$.
Define
$$\eta(\Ffunc)(x) : \Ffunc(x) \to \Iffunc \circ \Jffunc (\Ffunc)(x)$$
to be the unique map from $\Ffunc(x)$ to the inverse limit $\Iffunc \circ \Jffunc (\Ffunc)(x)$ guaranteed by universal properties.
This definition of the unit further guarantees that whenever we have an elementary homotopy $\alpha \Rightarrow \beta$, the following diagram
commutes
\begin{equation*}
\xymatrix{
\Ffunc(x) \ar[rr]^{\Ffunc(\alpha = \beta)} \ar[d]_{\eta(\Ffunc)(x)}^\cong && \Ffunc(x')
\ar[d]^{\eta(\Ffunc)(x')}_\cong \\
\Iffunc \circ \Jffunc (\Ffunc)(x) \ar[rr]^{\Iffunc \circ \Jffunc (\Ffunc)(\alpha = \beta)} && \Iffunc \circ \Jffunc (\Ffunc)(x'),
}
\end{equation*}
thereby showing that

A morphism in $\psi : \Ffunc \to \Gfunc$ in $\big [ \Ent(X, \Sstrat), \Omega \big ]$
induces the following commutative diagram:
	\begin{equation*}
	\xymatrix{
	\Ffunc \ar[rr]^{\psi} \ar[d]^{\eta(\Ffunc)} && \Gfunc \ar[d]^{\eta(\Gfunc)} \\
	\Iffunc \circ \Jffunc (\Ffunc) \ar[rr]^{\Iffunc \circ \Jffunc(\psi)} && \Iffunc \circ \Jffunc (\Gfunc).
	}
	\end{equation*}
This completes the proof of our main equivalence theorem.
\end{proof}

\begin{cor}
\label{cor:local-system}
Suppose $(X,\Sstrat)$ is a conically stratified space with a unique stratum $X$.
Then the category of locally constant cosheaves $\Loc_{\Omega}(X)$ is equivalent to $\big[ \pi_1(X),\Omega \big]$.
\end{cor}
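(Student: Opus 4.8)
Corollary~\ref{cor:local-system} concerns the special case of the Classification Theorem (Theorem~\ref{thm:classification}) where the stratified space $(X, \Sstrat)$ consists of a single stratum. In that setting a constructible cosheaf is precisely a locally constant cosheaf by definition, so $\Csh_\Omega(X, \Sstrat) = \Loc_\Omega(X)$. Meanwhile, Proposition~\ref{prop:groupoid} establishes that when there is a unique stratum, the entrance path category $\Ent(X, \Sstrat)$ is equivalent to the fundamental groupoid $\pi_1(X)$. The strategy is therefore to chain these facts: specialize the Classification Theorem, then transport along the equivalence of the indexing categories.

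Let me outline the steps in order. First, I would observe that the hypothesis ``$(X, \Sstrat)$ has a unique stratum $X$'' is exactly the condition in the definition of a locally constant cosheaf, so $\Loc_\Omega(X)$ is by definition the same category as $\Csh_\Omega(X, \Sstrat)$. Second, I would invoke Theorem~\ref{thm:classification} directly to obtain an equivalence
\begin{equation*}
\Csh_\Omega(X, \Sstrat) \simeq \big[ \Ent(X, \Sstrat), \Omega \big].
\end{equation*}
Third, I would apply Proposition~\ref{prop:groupoid} to get an equivalence of categories $\Ent(X, \Sstrat) \simeq \pi_1(X)$. Fourth, I would use the standard fact that an equivalence of small categories $\calC \simeq \calD$ induces an equivalence of functor categories $[\calC, \Omega] \simeq [\calD, \Omega]$ by precomposition. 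Concatenating these three equivalences yields
\begin{equation*}
\Loc_\Omega(X) = \Csh_\Omega(X, \Sstrat) \simeq \big[ \Ent(X, \Sstrat), \Omega \big] \simeq \big[ \pi_1(X), \Omega \big],
\end{equation*}
which is the claim.

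The proof is essentially formal, so there is no substantial obstacle, but the one point requiring a sentence of justification is the fourth step: that precomposition with an equivalence $\Ent(X, \Sstrat) \to \pi_1(X)$ induces an equivalence on functor categories into $\Omega$. This is a routine consequence of the fact that an equivalence of categories admits a quasi-inverse together with natural isomorphisms, and precomposition is functorial; the induced functors in the two directions are then readily checked to be mutually quasi-inverse. One should note that $\pi_1(X)$ and $\Ent(X, \Sstrat)$ must be small (or essentially small) for $[\pi_1(X), \Omega]$ and $[\Ent(X,\Sstrat),\Omega]$ to be well-behaved functor categories, which holds here since the objects are points of $X$. Because every step is an application of a result already in hand, I would keep the write-up to a few lines rather than expanding the elementary functor-category argument.
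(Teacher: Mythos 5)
Your proof is correct and is exactly the intended argument: the paper states Corollary~\ref{cor:local-system} without proof precisely because it follows by specializing Theorem~\ref{thm:classification} to the one-stratum case (where constructible means locally constant by definition) and then transporting along the equivalence $\Ent(X,\Sstrat)\simeq\pi_1(X)$ from Proposition~\ref{prop:groupoid}. Nothing is missing, and your brief justification of the precomposition step is appropriate.
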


\begin{ex}\label{ex:zig-zag-modules}
Suppose $X=\R$ is stratified into $n$ vertices and $n+1$ open intervals.
If $\cosheaf{F}:\Open(\R) \to \Omega$ is a constructible cosheaf, then Theorem~\ref{thm:classification} implies that $\cosheaf{F}$ is equivalent to a functor $\tilde \Ffunc:\Ent(\R,\Sstrat) \to \Omega$, which by Example~\ref{ex:R} is equivalent to a functor modeled on the associated zig-zag category $Z$:
\[
\tilde \Ffunc|_Z: \qquad \tilde \Ffunc(y_0) \rightarrow \tilde \Ffunc(x_1) \leftarrow \tilde \Ffunc(y_1) \rightarrow \cdots \leftarrow \tilde \Ffunc(y_{n-1}) \rightarrow \tilde \Ffunc(x_n) \leftarrow \tilde \Ffunc(y_n).
\]
If $\Omega=\Vect_{\Bbbk}$, the functor $\tilde \Ffunc |_Z$ defines a \define{zigzag module}, see~\cite{carlsson2010zigzag}.
Our classification theorem implies that the category of $(\R,\Sstrat)$-constructible cosheaves valued in $\Vect_{\Bbbk}$ are equivalent to the category of zigzag modules.
\end{ex}

\begin{rmk}
We continue Example~\ref{ex:zig-zag-modules} to obtain a finer classification of $(\R,\Sstrat)$-constructible cosheaves valued in $\Vect_{\Bbbk}$.
We refer to any connected full subcategory $I$ of
the zig-zag category $Z$ as an \define{interval category}.
Here, connected means that any two objects in $I$ can be reached via a sequence of morphisms in $I$.
A functor $\Iffunc_{\Bbbk} : Z \to\Vect_{\Bbbk}$ that is constant with value $\Bbbk$ on $I$ and $0$ outside of $I$ is called an \define{interval module}.
A theorem of Gabriel~\cite{gabriel} implies that every functor $\Ffunc : Z \to \Vect_{\Bbbk}$ can be expressed uniquely as a direct sum of interval modules, assuming $\Bbbk$ is algebraically closed.
We note that since each object in the interval category $I$ determines a stratum in $(\R,\Sstrat)$, we can associate to $I$ a topological interval in $\R$ by taking the union of the strata specified by $I$.
For instance, if $(\R,\Sstrat)$ has at least five strata, then the interval category $x_1 \leftarrow y_1$ determines the topological interval $[x_1,x_2)$.
Theorem~\ref{thm:classification}, Example~\ref{ex:R}, and Gabriel's Theorem together imply that every $(\R,\Sstrat)$-constructible cosheaf valued in $\Vect_{\Bbbk}$ can be expressed uniquely as a direct sum of indecomposable cosheaves supported on intervals of four types: open, closed, half-open on the left, and half-open on the right.
%The corresponding statement for constructible sheaves holds as well.
\end{rmk}

\begin{figure}[h]
\centering
\includegraphics[width=\textwidth]{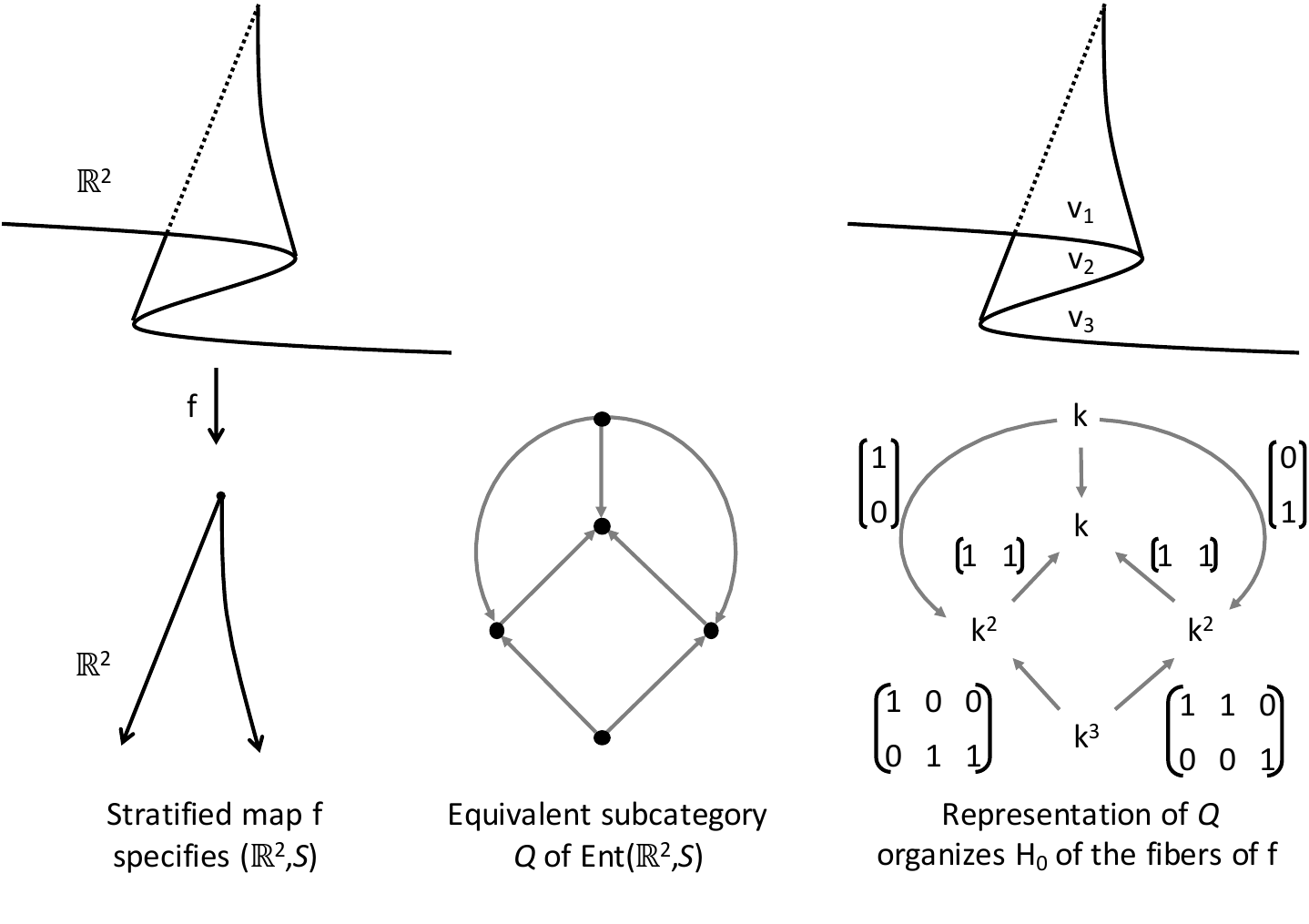}
\caption{Studying a map with a cusp and two folds using the Entrance Path Category}
\label{fig:cusp-example}
\end{figure}

\begin{ex}\label{ex:whitney-cusp}
Consider the Whitney cusp
\[
	f:\R^2 \to \R^2 \qquad f(x,y) := (xy-x^3,y).
\]
The importance of such a map was illustrated by Whitney~\cite{Whitney55}, who showed that cusps and folds are the only singularities that appear for structurally stable maps from the plane to the plane.
The cusp singularity introduces a zero-dimensional stratum and the two fold singularities introduce two one-dimensional strata in the codomain of $f$.
The complement of these strata define the two two-dimensional strata; see the left part of Figure~\ref{fig:cusp-example}.
Let $\cosheaf{F} : \Ent(\Rspace^2, \Sstrat) \to \Vect$ be the functor
that assigns to each point $x \in \Rspace^2$, the homology group $H_0 \big( f^{-1}(x) \big)$, i.e. $\Ffunc(x):= H_0 \big( f^{-1}(x) \big)$.
Given an entrance path $\alpha$ in $(\Rspace^2, \Sstrat)$ and a point $y \in f^{-1}\big( \alpha(0) \big)$,
lift $\alpha$ along $f$ to a path starting at $y$.
If two entrance paths $\alpha$ and $\beta$ are equivalent in $(\Rspace^2, \Sstrat)$ and a point
$y \in f^{-1} \big( \alpha(0) = \beta(0) \big)$ is given, then both lift to homotopic paths
in the domain of $f$ starting at $y$.
Thus an entrance path $\alpha$ in $(\Rspace^2, \Sstrat)$ induces a well defined map
$\cosheaf{F}(\alpha) : \cosheaf{F}\big( \alpha(0) \big) \to \cosheaf{F} \big( \alpha(1) \big)$.
By picking a representative point from each of the five strata described above, we obtain a subcategory $Q$ of $\Ent(\R^2,\Sstrat)$ with five objects.
Since each stratum is simply connected one can see that $Q$ is equivalent to $\Ent(\R^2,\Sstrat)$.
This greatly simplifies the description of the cosheaf $\cosheaf{F}$.
\end{ex}

%%%%%%%%%%%%%%%%%%%%%%%%%%%%%%%%%%%%%%%
\section{Stratified Coverings}
\label{sec:coverings}
%%%%%%%%%%%%%%%%%%%%%%%%%%%%%%%%%%%%%%%

Now we describe how a constructible cosheaf valued in $\Omega=\Set$ is equivalent to a stratified covering.
We fix an $n$-dimensional conically stratified space $(X,\Sstrat)$.
Recall that by virtue of the axiom of the frontier, the set of strata $\Sstrat$ is a poset where $S' < S$ if $S'\subseteq \overline{S}$ and $S\neq S'$.

\begin{defn}\label{defn:stratified-covering}
A stratum-preserving map $f : (Y, \Tstrat) \to (X, \Sstrat)$ of conically stratified spaces is a \define{stratified covering} of $(X, \Sstrat)$ if the following holds:
	\begin{itemize}
	\item For each stratum $S \in \Sstrat$ and for each stratum $T\subseteq f^{-1}(S)$ the restriction $f|_T : T \to S$ of $f$ is either a map from the empty set or a covering space.
	Recall that a \define{covering space} is a continuous map $p:\tilde{S} \to S$ for which every point $x \in S$ has a neighborhood $V$ so that the preimage $p^{-1}(V)$ is the disjoint union of open sets, each of which are mapped homeomorphically onto $V$ by $p$. In particular, a covering space is surjective.
	\item If $f$ takes $T \in \Tstrat$ to $S \in \Sstrat$ and there is an $S' \in \Sstrat$ such that $S' < S$,
	then there is a $T' \in \Tstrat$ such that $T' < T$ and $f$ takes $T'$ to $S'$.
	\end{itemize}
A \define{morphism} from a stratified covering $f : (Y, \Tstrat) \to (X, \Sstrat)$ to a stratified covering $f' : (Y', \Tstrat') \to (X, \Sstrat)$
is a stratum preserving map $\mu : (Y, \Tstrat) \to (Y', \Tstrat')$ such that $f = f' \circ \mu$.
Let $\Cov(X, \Sstrat)$ be the category of stratified coverings of $(X, \Sstrat)$.
\end{defn}

\begin{ex}[Reeb Graphs]\label{ex:reeb-graph}
Let $f:M \to \R$ be a Morse function on a compact manifold $M$.
Consider the equivalence relation on $M$ that identifies $x,y\in M$ if and only if $f(x)=f(y)=v$ and if there exists a path $\gamma: [0,1] \to f^{-1}(v)$ with $\gamma(0)=x$ and $\gamma(1)=y$.
Denote the quotient space of $M$ by this equivalence relation by $\bar{M}$. Note that since $f$ is constant on equivalence classes it factors to define a map $\bar{f}:\bar{M} \to \R$.
We claim that there exists a natural pair of stratified spaces $(\bar{M}, \Sstrat)$ and $(\R,\Tstrat)$ so that $\bar{f}:(\bar{M}, \Sstrat)\to (\R,\Tstrat)$ is a stratified covering.

It is classical result~\cite{reeb1946points,kronrod1950functions} that the quotient space $\bar{M}$ is a finite graph and that $\bar{f}$ restricts to a proper submersion over each edge; see~\cite[pp.390--391]{sharko2006kronrod} for a more recent treatment.
The map $\bar{f}:\bar{M} \to \R$ is called the \define{Kronrod-Reeb graph} construction associated to $f:M\to \R$.
To see that $\bar{f}$ is naturally a stratified covering, we first let $\Tstrat$ be the stratification of $\R$ into critical values
of $f$ as the zero-dimensional strata and the connected components of the complement as the one-dimensional strata.
Let $\Sstrat'$ be the natural stratification of $\bar{M}$ into vertices and open edges; which exists by virtue of the fact that $\bar{M}$ has the structure of a finite graph.
Let $\Sstrat$ be the coarsest refinement of $\Sstrat'$ where every point in the pre-image of a critical value is regarded as a zero-dimensional stratum.
It is easy to see that with these stratifications $\bar{f}:(\bar{M}, \Sstrat)\to (\R,\Tstrat)$ is a stratified covering.
\end{ex}

\begin{ex}\label{ex:z^n}
Suppose $X=\CC$ is stratified into two strata $S=\CC - \{0\}$ and $S'=\{0\}$.
The map $f(z)=z^n$ defines a stratified covering $f:(X,\Sstrat)\to (X,\Sstrat)$.
\end{ex}

\begin{ex}
The map considered in Example~\ref{ex:whitney-cusp} is also a stratified covering of the plane.
\end{ex}

The definition of a stratified covering implies the following technical lemma.

\begin{lem}
\label{lem:inverse-image}
Given a stratified covering $f : (Y, \Tstrat) \to (X, \Sstrat)$
and a basic open $U \in \Basic(X, \Sstrat)$, the pre-image $f^{-1}(U)$ is either empty
or a disjoint union of basic opens in $\Basic(Y, \Tstrat)$.
\end{lem}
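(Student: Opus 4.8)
The plan is to reduce the statement to the claim that, when $f^{-1}(U)\neq\emptyset$, \emph{every connected component of $f^{-1}(U)$ is a basic open of $Y$}. Since $Y$ is locally conical it is locally connected, so the open set $f^{-1}(U)$ is the disjoint union of its connected components, each of which is open; and every basic open is connected (being homeomorphic to $\RR^d\times C(L)$). Thus proving that each component is basic proves the lemma. Let $S$ be the stratum associated to $U$, with $\dim S=d$, and fix a filtration-preserving homeomorphism $\RR^d\times C(L)\cong U$ carrying $\RR^d\times\{\star\}$ onto the core $U\cap S$ and $(0,\star)$ to the center $x$. The frontier condition of Definition~\ref{defn:stratified-covering} forces $f^{-1}(S)\neq\emptyset$ whenever $f^{-1}(U)\neq\emptyset$, so $f|_S:f^{-1}(S)\to S$ is a covering space. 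Because $U\cap S\cong\RR^d$ is contractible, this covering is trivial over the core: $f^{-1}(U\cap S)=\bigsqcup_i W_i$ with each $W_i$ mapped homeomorphically onto $U\cap S$, and each $W_i$ containing a unique point $y_i$ over $x$.

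The heart of the argument is an induction on $n=\dim X$ establishing that, for each sheet $W_i$, the component of $f^{-1}(U)$ containing $W_i$ is a basic open whose core is $W_i$. When $d=n$ the link $L$ is empty, $U\cong\RR^n$ is a single top-dimensional stratum, and the claim is exactly the triviality of the covering $f^{-1}(U)\to U$ over the contractible base $\RR^n$. For $d<n$ the compact link $L$ has dimension $n-d-1<n$, and restricting $f$ over a link of $x$ yields a stratified covering of $L$ to which the inductive hypothesis applies. One then reassembles the cone: over the open complement $U^{\circ}=U\setminus(U\cap S)\cong\RR^d\times\RR_{>0}\times L$ the covering is controlled by induction together with triviality in the contractible $\RR^d\times\RR_{>0}$ directions, and the cone points are filled in along the sheets $W_i$. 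Concretely, each $y_i$ acquires a basic neighborhood $V_i\cong\RR^d\times C(M_i)$ on which $f$ is modeled on the product of the (trivial) covering of $\RR^d$ with the cone of a stratified covering $M_i\to L$ of links, and $V_i$ is the component containing $W_i$.

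To see that these $V_i$ exhaust $f^{-1}(U)$, one connects an arbitrary $y\in f^{-1}(U)$ to the core. Writing $f(y)=(v,(\ell,s))$ with $s>0$, the cone ray $\tau\mapsto\bigl(v,(\ell,(1-\tau)s)\bigr)$ is an entrance path that stays inside a single stratum $S''$ of $U$ for $\tau<1$ and lands on $S$ at $\tau=1$. Lifting this path through the covering $f|_{S''}$ from the starting point $y$ and passing to the limit as $\tau\to1$ produces a point of some sheet $W_i$ in the same component as $y$; the frontier condition of Definition~\ref{defn:stratified-covering} supplies a lower stratum of $Y$ over $S$ for this limit to fall into, and the local conical model of Definition~\ref{defn:stratified-space} guarantees the lift extends continuously over the cone point. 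Hence every component meets the core, and since each sheet $W_i\to U\cap S$ is a homeomorphism, a component meets exactly one sheet. Therefore $f^{-1}(U)=\bigsqcup_i V_i$, with the $V_i$ the connected components and each a basic open.

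I expect the main obstacle to be precisely this last point: controlling the lifted cone ray as it approaches the cone point, where $f$ may be a genuine branched cover rather than a local homeomorphism (as for the map $z\mapsto z^n$ of Example~\ref{ex:z^n}). It is here that the inductive link structure and the second axiom of a stratified covering are indispensable, and where one must verify that the lift converges to a point over the center rather than escaping the basic neighborhood; the rest of the argument is bookkeeping built on the triviality of coverings over contractible strata.
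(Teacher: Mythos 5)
Your proposal is correct and follows essentially the same route as the paper: both arguments lift the cone rays $t \mapsto h_x(v,p,t)$ through the covering over each stratum, invoke the second axiom of Definition~\ref{defn:stratified-covering} together with continuity to force the lifted rays to converge to points of $f^{-1}(x)$, and assemble $f^{-1}(U_x)$ as a disjoint union $\bigsqcup_{y\in f^{-1}(x)}\RR^d\times C(\bar{L}^y_x)$ of cones on lifted links. The only differences are organizational --- you add a reduction to connected components and an explicit induction on $\dim X$ to control the stratified structure of the lifted link, which the paper leaves implicit --- and you correctly flag that the convergence of the lift at the cone point is the step requiring the most care, a point the paper also passes over rather quickly.
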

\begin{proof}
Fix a point $x$ in a $d$-dimensional stratum $S$ and a basic open neighborhood
$U_x$ of $x$.

Suppose there is no $y$ such that $f(y)=x$.
The first property of a stratified covering
implies that $f^{-1}(U_x \cap S)$ is empty and hence there is no $T \in \Tstrat$ such that $f(T)=S$.
Consider a stratum $S' > S$ that intersects $U_x$.
The pre-image $f^{-1}(U_x \cap S')$ must also be empty,
because otherwise that would contradict the second property of a stratified covering.
This implies that $f^{-1}(U_x)$ is empty.

Suppose there is a $y\in Y$ such that $f(y)=x$.
Recall that we have a filtration-preserving homeomorphism
$h_x:\RR^d\times C(L_x)\to U_x$.
Consider a point of the form $x' = h_x(0,p,1)$.
This point has a path passing through it by letting
$\gamma_{x'}(t)= h_x(0,p,t)$ range over all values $t>0$.
This path is contained in a unique stratum $S'$.
The first property of a stratified covering implies that this path either has empty pre-image or
a set of lifts $\bar{\gamma}^{y'}_{x'}(t)$ indexed by the fiber $y'\in f^{-1}(x')$.
By continuity and the second property of a stratified covering,
the limit of $\bar{\gamma}^{y'}_{x'}(t)$ as $t$ tends to $0$ exists.
Let
$$C(\bar{L}^y_x) \quad = \quad \bigcup_{x' \in h_x \big( 0, (L_x,1) \big)} \Big \{ \bar{\gamma}^{y'}_{x'}(t) \; \Big | \; \lim_{t\to 0} \bar{\gamma}^{y'}_{x'}(t)=y \Big \} \cup \{y\}.$$
Note that the filtration of $h_x \big( 0, (L_x,1) \big) \cong L_x $ defines a filtration of $C(\bar{L}^y_x)$ as well.
By considering similar paths passing through each point of the form $x''=h_x(v,p,1)$ where $v$ ranges over $\RR^d$ we see that
$$f^{-1}(U_x) \quad \cong \quad \bigcup_{y\in f^{-1}(x)} \RR^d \times C(\bar{L}_x^y)$$
is a disjoint union of basic opens in $Y$.
\end{proof}

We now show that the category of stratified coverings and morphisms thereof is equivalent to the category of representations of the entrance path category valued in $\Set$.

\begin{thm}
\label{thm:stratified_covering}
$\Cov(X, \Sstrat)$ is equivalent to $\big [ \Ent(X, \Sstrat), \Set \big ]$.
\end{thm}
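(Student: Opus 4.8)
The plan is to exhibit a pair of functors
$\Phi : \Cov(X,\Sstrat) \to \Csh_\Set(X,\Sstrat)$ and
$\Psi : \Csh_\Set(X,\Sstrat) \to \Cov(X,\Sstrat)$
and to show they are mutually inverse up to natural isomorphism, following the dictionary indicated before the statement: a covering is sent to its cosheaf of path components, and a cosheaf is sent to the total space whose fiber over $x$ is the costalk $\cosheaf{F}(x)$.

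First I would define $\Phi$ on a stratified covering $f:(Y,\Tstrat)\to(X,\Sstrat)$ by $\Phi(f)(U)=\pi_0(f^{-1}(U))$, with the evident functoriality in $U$ and in morphisms of coverings. To see that $\Phi(f)$ is an $\Sstrat$-constructible cosheaf I would appeal to Lemma~\ref{lem:inverse-image}: the pre-image of a basic open is a disjoint union of basic opens in $Y$, so a basic covering $\Ucat$ of an open $U$ pulls back to a basic covering of $f^{-1}(U)$, and the gluing axiom then follows verbatim from the argument of Example~\ref{ex:pc-cosheaf}. For constructibility, if $V\subseteq U$ are basic opens associated to a common stratum $S$, then Lemma~\ref{lem:inverse-image} writes both $f^{-1}(U)$ and $f^{-1}(V)$ as disjoint unions of contractible basic opens indexed by the common fiber $f^{-1}(x)$ over the cone point $x\in S$, so $\Phi(f)(V\subseteq U)$ is a bijection.

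Next I would build $\Psi$. As a set, let $Y=\{(x,s) : x\in X,\ s\in\cosheaf{F}(x)\}$ with projection $f(x,s)=x$, and topologize $Y$ by the basis $\langle U,a\rangle=\{(x',s') : x'\in U,\ \cosheaf{F}(x'\in U)(s')=a\}$ indexed by basic opens $U$ and elements $a\in\cosheaf{F}(U)$; the stratification $\Tstrat$ is taken to be the connected components of the sets $f^{-1}(S)$, $S\in\Sstrat$. I would then verify the two covering axioms. That $f^{-1}(S)\to S$ is a covering follows because $\cosheaf{F}$ restricts to a locally constant cosheaf on $S$, which by Corollary~\ref{cor:local-system} is precisely a $\pi_1(S)$-set, that is, the descent data of a covering of $S$. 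The frontier condition, together with the claim that each $\langle U,a\rangle$ is in fact a basic open of $Y$, I would establish by induction on $\dim X$, using that $\cosheaf{F}$ restricts to a constructible cosheaf on the link $L_x$ and that the costalk $\cosheaf{F}(x)$ indexes the cone points glued in over $x$. This step — showing that the costalk data over the link assembles into a genuine cone, so that $\Psi(\cosheaf{F})$ really is a stratified covering — is the dual of Lemma~\ref{lem:inverse-image}, and it is where I expect the main difficulty to lie.

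Finally I would produce the two natural isomorphisms. For $\Psi\circ\Phi\cong\id$, starting from a covering $f$, the costalk $\Phi(f)(x)=\lim_{U\ni x}\pi_0(f^{-1}(U))$ is canonically $f^{-1}(x)$ by Lemma~\ref{lem:inverse-image}, since each neighboring basic open has components indexed by $f^{-1}(x)$, and the topologies agree on the generating basic opens. For $\Phi\circ\Psi\cong\id$, starting from a cosheaf $\cosheaf{F}$, the construction of the topology makes the components of $f^{-1}(U)$ over a basic open $U$ exactly the elements of $\cosheaf{F}(U)$, so $\pi_0(f^{-1}(U))\cong\cosheaf{F}(U)$ on basic opens; since both $\Phi\circ\Psi(\cosheaf{F})$ and $\cosheaf{F}$ are constructible cosheaves agreeing there, and every open set is the colimit of a basic covering via the gluing axiom, the isomorphism propagates to all open sets. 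Naturality in each case is immediate from the universal properties defining the costalks and the colimits.
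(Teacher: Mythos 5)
Your proposal is correct and follows essentially the same route as the paper: the same pair of functors ($\pi_0$ of pre-images in one direction, the union of costalks topologized by the sets $\bar{U}_x^y=\langle U,a\rangle$ in the other), the same appeal to Lemma~\ref{lem:inverse-image} for the gluing axiom, and the same unit and counit via the identifications $f^{-1}(x)\cong\cosheaf{F}(x)$ and $\pi_0(f^{-1}(U))\cong\cosheaf{F}(U)$ on basic opens. The one step you defer as the main difficulty---that each $\langle U,a\rangle$ is a basic open of $Y$, so that the constructed map is genuinely a stratified covering---is carried out in the paper by exactly the mechanism you name: one assembles $\bar{L}_x^y$ from the costalks over the link and uses constructibility along the radial paths $t\mapsto h_x(v,p,t)$ to identify $\bar{U}_x^y$ with $\Rspace^d\times C(\bar{L}_x^y)$.
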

\begin{proof}
We construct two functors
$$\Iffunc : \Cov(X, \Sstrat) \to \big [ \Ent(X, \Sstrat), \Set \big ]$$
$$\Jffunc : \big [ \Ent(X, \Sstrat), \Set \big ] \to \Cov(X, \Sstrat)$$
such that
$\Jffunc \circ \Iffunc$ is naturally isomorphic to the identity functor on $\Cov(X, \Sstrat)$
and $\Iffunc \circ \Jffunc$ is naturally isomorphic to the identity functor on
$\big [ \Ent(X, \Sstrat), \Set \big ]$.

\paragraph{Construction of $\Iffunc$.}
Fix a stratified covering $f: (Y, \Tstrat) \to (X, \Sstrat)$.
For each point $x \in X$ we let
$\Iffunc(f)(x) := f^{-1}(x)$.
For an entrance path $\alpha : x' \to x$ in $(X, \Sstrat)$ that starts at $x'$ and ends at $x$ we argue that
each point $y' \in f^{-1}(x')$ specifies a lift of $\alpha$ to an entrance path in
$(Y, \Tstrat)$ starting at $y'$ and ending at a point in $f^{-1}(x)$, assuming $f^{-1}(x')\neq \varnothing$.
To see this, we first observe that if an entrance path is contained in a unique stratum in $(X,\Sstrat)$, then every point $y' \in f^{-1}(x')$ specifies a unique lift by virtue of the unique path-lifting property of covering spaces~\cite[pp.~66-68]{spanier1966algebraic}.
A general entrance path $\alpha$ that intersects multiple strata is equivalent to a composition of entrance paths
$\alpha_n\ast\cdots\ast\alpha_1$
where each path $\alpha_i$ sends $[0,1)$ to a unique stratum in
$(X,\Sstrat)$ and enters a different stratum at $t=1$.
This allows us to define the lift of $\alpha$ inductively.
We can choose a unique lift of $\alpha_i|_{[0,1)}$ for each $y_i \in f^{-1}( \alpha_i (0))$ because it is contained in a unique stratum.
Lemma~\ref{lem:inverse-image} implies that any basic open $U_i$ about $\alpha_i(1)$ has a pre-image that is homeomorphic to a disjoint union of basic opens, one about each point $y_{i+1}\in f^{-1}(\alpha_i(1))=f^{-1}(\alpha_{i+1}(0))$.
We say that a lift of $\alpha_i|_{[0,1)}$ \define{specializes} to a point $y_{i+1} \in f^{-1}(\alpha_i(0))$ if the basic open about $y_{i+1}$ intersects the lift of $\alpha_i|_{[0,1)}$.
The specialization of a lift of $\alpha_i|_{[0,1)}$ is also the continuous limit of that lift.
Repeating this procedure for $\alpha_{i+1}|_{[0,1)}$ allows us to inductively lift a general entrance path $\alpha$.
Consequently, an entrance path $\alpha$ induces a map of sets
$$\Iffunc(f)(\alpha : x' \to x) : \Iffunc(f)(x') \to \Iffunc(f)(x).$$
Moreover, the above reasoning shows that an elementary homotopy $\alpha \Rightarrow \beta$ lifts to a collection of elementary homotopies
in $(Y, \Tstrat)$ thereby making the map
$\Iffunc(f)(\alpha) = \Iffunc(f)(\beta)$.

\paragraph{Construction of $\Jffunc$.}
Given a representation $\cosheaf{F} : \Ent(X, \Sstrat) \to \Set$,
we construct a stratified covering $f : (Y, \Tstrat) \to (X, \Sstrat)$.
As a set, let $Y := \bigsqcup_{x \in X} \cosheaf{F}(x)$ and let $f : Y \to X$ be the natural projection map.
Now we topologize $Y$.
Recall from Proposition~\ref{prop:terminal-object} that for any $x \in X$ and any basic open neighborhood $U_x$ of $x$, any two entrance paths
in $U_x$ with a common starting point $x'$ that end at $x$ are equivalent.
Consequently, for each point $x \in X$ and for each basic open neighborhood $U_x$ of $x$, and each $y \in \cosheaf{F}(x)$, let
\[
	\bar{U}_x^y := \bigcup_{x'\in U_x} \Big \{ y' \in \Ffunc(x') \;  \Big | \; \cosheaf{F}(\alpha : x' \to x)(y') = y \Big \}.
\]
Here $\alpha_x : x' \to x$ is any entrance path in $U_x$ starting at $x'$ and ending at $x$.
The set $\big \{ \bar{U}_x^y \big \}$ over all $x \in X$, $U_x$, and $y \in \cosheaf{F}(x)$ forms
the basis for the topology on $Y$.
The projection $f: Y \to X$ is now a continuous map.
We now put on $Y$ the structure of a conically stratified space $(Y, \Tstrat)$ so that
$f : (Y, \Tstrat) \to (X, \Sstrat)$ is a stratified covering.
By setting $Y^d= f^{-1}(X^d)$,
the $n$-step filtration of $X$ defines an $n$-step filtration of $Y$.
We check that that this filtration satisfies the properties of Definition~\ref{defn:stratified-space}.
Consider a point $y\in Y^d - Y^{d-1}$ and let
\[
\bar{L}_x^y = \bigcup_{x' \in h_x \big(0,(L_x,1) \big)} \Big \{ y' \in \cosheaf{F}(x')
\; \Big | \; \cosheaf{F}(\alpha: x' \to x)(y') = y  \Big \}
\]
where $h_x:\RR^d\times C(L_x)\to U_x$ is the filtration-preserving homeomorphism that coordinatizes $U_x$.
Note that $\bar{L}_x^y$ has an $(n-d-1)$-step filtration because $L_x$ does.
If $S$ is the stratum containing $x=f(y)$, then
$\cosheaf{F}(\alpha_{x'}^x)$ is an isomorphism for each $x' \in S \cap U_x$.
This means that $f$ restricts to a homeomorphism
$\bar{U}_x^y \cap f^{-1}(S) \to S \cap U_x \cong \Rspace^d$ making
$f$ a covering space over each stratum in $\Sstrat$ with fiber $\cosheaf{F}(x)$.
Finally, observe that if $x'=h_x(v,p,1)$ is in a stratum $S'$, then any point $x''=h_x(v,p,t)$ is also
in $S'$ for all $t > 0$.
We have
\[
\bar{U}_x^y \cong \RR^d \times C(\bar{L}_x^y),
\]
thereby proving that $Y$ is a stratified space.
To check the second property of a stratified covering,
suppose that $S'< S$.
Let $\alpha$ be any entrance path from a point $x \in S$ to a point $x' \in S'$.
If $f^{-1}(x):=\cosheaf{F}(x) \neq \emptyset$, then $\cosheaf{F}(x')=:f^{-1}(x') \neq \emptyset$ because the set map
$\cosheaf{F}(\alpha):F(x) \to F(x')$ cannot be a map to the empty set, because in the category $\Set$ there are no morphisms to the empty set other than the identity morphism.

\paragraph{Construction of $\epsilon$.}
We now prove that the co-unit $\epsilon: \Jffunc \circ \Iffunc \Rightarrow \id_{\Cov}$ is a natural isomorphism.
The co-unit is defined as follows: given a stratified cover $f: (Y, \Tstrat) \to (X, \Sstrat)$ the associated stratified cover $\Jffunc \circ \Iffunc(f)$ is defined by $\bigsqcup_{x \in X} \Iffunc(f)(x)$, where $\Iffunc(f)(x)=f^{-1}(x)$.
Since for every $x\in X$, we have a map $f^{-1}(x) \hookrightarrow Y$ given by inclusion, the universal property of the coproduct provides a universal map $\Jffunc \circ \Iffunc(f)=\bigsqcup_{x \in X} \Iffunc(f)(x) \to Y$.
We set $\epsilon(f):\Jffunc \circ \Iffunc(f) \to Y$ to be this universal map.
Note that by construction, the basic opens of $\Jffunc \circ \Iffunc(f)$ are in bijection with basic opens of
$(Y, \Tstrat)$ so $\epsilon(f)$ is a stratum preserving homeomorphism.

To check naturality of $\epsilon$ consider a morphism from a stratified covering space $f : (Y, \Tstrat) \to (X, \Sstrat)$ to
$f' : (Y', \Tstrat') \to (X, \Sstrat)$ given by a stratum preserving map
$\mu : (Y, \Tstrat) \to (Y', \Tstrat')$ where $f = f' \circ \mu$.
Since $\mu$ must commute with the map to $(X,\Sstrat)$, we have that for every point $x\in X$, $\mu_x:f^{-1}(x) \to f'^{-1}(x)$.
This implies that we have an associated map between the disjoint unions
\[
	\epsilon(\mu): \bigsqcup_{x \in X} \Iffunc(f)(x) \to \bigsqcup_{x \in X} \Iffunc(f')(x).
\]
It is easy to check that this map is a stratum-preserving map assuming that $\mu$ is.

\paragraph{Construction of $\eta$.}
We now prove that the unit $\eta: \id_{[ \Ent(X, \Sstrat), \Set ]} \Rightarrow \Iffunc \circ \Jffunc$ is a natural isomorphism, and is in fact the identity.
Recall that $\Jffunc$ takes a representation of the entrance path category $\cosheaf{F} : \Ent(X, \Sstrat) \to \Set$ and turns it into a stratified cover, where the fiber over a point $x\in X$ is $\cosheaf{F}(x)$.
Applying $\Iffunc$ to construct a representation of the entrance path category valued in $\Set$ then assigns the same set $\cosheaf{F}(x)$ to $\Iffunc \circ \Jffunc (\cosheaf{F})(x)$.
Furthermore, an elementary homotopy $\alpha \Rightarrow \beta$ between entrance paths in
$\Ent(X, \Sstrat)$ lifts to a collection of elementary homotopies in $\Jffunc (\cosheaf{F})$.
This means that if $\gamma$ and $\gamma'$ are arbitrary and equivalent entrance paths in $(X, \Sstrat)$,
then $\Iffunc \circ \Jffunc(\cosheaf{F})$ send both $\gamma$ and $\gamma'$ to $\Ffunc(\gamma) = \Ffunc(\gamma')$.
Therefore $\Iffunc \circ \Jffunc(\cosheaf{F}) = \cosheaf{F}$.
For any natural transformation $\phi : \Ffunc \to \Gfunc$, $\Ifunc \circ \Jfunc (\phi)$ is equal to the map
$\phi(x) : \Ffunc(x) \to \Gfunc(x)$ for all $x \in X$, and therefore $\Ifunc \circ \Jfunc(\phi) = \phi$.
This completes the proof.
\end{proof}

The following Corollary is an immediate consequence of
Theorem \ref{thm:classification} and Theorem \ref{thm:stratified_covering}.

\begin{cor}\label{cor:strat-cover-ent-path}
$\Cov(X, \Sstrat)$ is equivalent to $\Csh_{\Set}(X, \Sstrat)$.
\end{cor}

\begin{rmk}
When $\Sstrat$ consists of a single stratum, namely all of $X$,
Theorem \ref{thm:stratified_covering}
restricts to the classical classification of covering spaces over a manifold.
\end{rmk}

\begin{rmk}
Theorem~\ref{thm:stratified_covering} and Corollary~\ref{cor:strat-cover-ent-path} together generalize the
categorification of Reeb graphs developed in~\cite{CRG}, which was concerned with the case $X=\R$,
to Reeb spaces \cite{Reeb_spaces}.
\end{rmk}

We conclude this section with two examples and a question.

\begin{ex}\label{ex:C-finite}
Suppose $X=\CC$ is stratified into two strata $S=\CC - \{0\}$ and $S'=\{0\}$.
Let $x \in S$.
In view of Example~\ref{ex:C},
a functor $\cosheaf{F}:\Ent(X,\Sstrat)\to\Set$ is determined by
two sets
$\cosheaf{F}(x)$ and $\cosheaf{F}(0)$,
a permutation $\cosheaf{F}(\sigma) : \cosheaf{F}(x) \to \cosheaf{F}(x)$,
and a map $\cosheaf{F}(\alpha) : \cosheaf{F}(x) \to \cosheaf{F}(0)$,
where $\cosheaf{F} (\alpha) \circ \cosheaf{F}(\sigma) = \cosheaf{F}(\alpha)$.
The sets $\cosheaf{F}(x)$ and $\cosheaf{F}(0)$
define the fibers $f^{-1}(x)$ and $f^{-1}(0)$
of a stratified cover.
The morphism $\cosheaf{F}(\alpha)$ defines the \define{specialization map}
from $f^{-1}(x)$ to $f^{-1}(0)$.
The condition that $\cosheaf{F} (\alpha) \circ \cosheaf{F}(\sigma) = \cosheaf{F}(\alpha)$ implies that
if two points in the fiber $f^{-1}(x)$ are related by a deck transformation,
then they must specialize to the same element of $f^{-1}(0)$.
In particular, the only connected stratified covers of $(X,\Sstrat)$ with finite fibers are those of the form $z^n$.
\end{ex}

The following example was suggested by an anonymous referee, which we have decided to include.

\begin{ex}\label{ex:C-infinite}
As a continuation of Example~\ref{ex:C-finite}, consider the representation of the entrance path category where any point $x\neq 0$ has a countably infinite set $\cosheaf{F}(x)\cong \Z$ assigned to it and $\cosheaf{F}(0)=\{\star\}$ is the one point set.
Suppose further that the deck transformation $\cosheaf{F}(\sigma):\Z \to \Z$ is the map that sends $n \mapsto n+1$.
Necessarily, $\cosheaf{F}(\alpha):\Z \to \{\star\}$ is the constant map.
If we consider the stratified cover $\Jffunc(\cosheaf{F})$ associated to $\cosheaf{F}$ in the proof of Theorem~\ref{thm:stratified_covering}, then we see that any basic open around $0$ in $\CC$ has a pre-image that is a basic open that is homeomorphic to the cone on the real line.
Moreover, this stratified cover---when restricted to the link--- restricts to the universal covering of the circle.
\end{ex}

Finally, we note that studying the automorphism group of a stratified covering is a very interesting problem that must generalize the usual Galois correspondence for covering spaces.
We leave this for future work.

\section{Conclusion}

The idea that there are nice combinatorial models for certain sheaves and cosheaves is not new, but it is one that continues to generate useful insights.
One of the first accounts in this vein was Zeeman's 1954 thesis~\cite[pp.~626-627]{zeeman1962dihomology}, which developed a theory of local coefficient systems that were modeled on the face-relation poset of a cell complex.
Zeeman made allusions to Leray's sheaf theory, but the connection with the sheaf axiom was not made explicit.
Although some of Zeeman's contributions were forgotten, they foreshadowed later results by Kashiwara~\cite{kashiwara1984riemann} and, separately,
Shepard~\cite{shepard}, who was a PhD student of MacPherson's.
Both Kashiwara and Shepard were concerned with a full treatment of the derived category of sheaves that are constructible with respect to a simplicial or cellular stratification and thus focused on sheaves valued in abelian categories.

At some point between 1984 and 2007, a parsimonious description of the category of $\Sstrat$-constructible sheaves, for stratifications $(X,\Sstrat)$ more general than simplicial or cellular, must have emerged.
Indeed, in Treumann's 2007 thesis, he outlined an unpublished result of MacPherson's---that $\Sstrat$-constructible sheaves valued in $\Set$ were equivalent to functors from the exit path category into $\Set$.
Treumann's thesis treated a 2-categorical analog of this result~\cite{treumann} and by 2009, Jacob Lurie had already sketched an $\infty$-category version of MacPherson's result in ``Derived Algebraic Geometry: Volume VI''~\cite{lurieDAG6}.
Since then, higher-categorical treatments of MacPherson's insight have gained attention from multiple authors---we refer to the introductory section of~\cite{barwick2018exodromy} on ``exodromy for topological spaces'' for a more complete bibliography of this thread of development.

Despite all of the above work, this paper provides the first self-contained treatment of MacPherson's result in a 1-categorical setting, adapted to constructible \emph{co}sheaves valued in a bi-complete category.
Our motivation to provide such a treatment comes from the increased use of sheaves and cosheaves in applications outside of mathematics; see~\cite{curry,CRG,munch2016convergence,kashiwara2018persistent,macpherson2018persistent,hansen2019toward} for some notable examples.
We believe that having a presentation that uses minimal technical overhead will lead other researchers to more creative uses of this theory.
Additionally, by rephrasing the constructibility hypothesis, our work suggests that an even more immediate proof of MacPherson's equivalence should be available: By localizing the poset of open sets along the ``associated to the same stratum'' relation, one should be able to produce the entrance path category directly.
Such a proof, if possible, would provide a conceptually minimal account of an observation that has inspired significant mathematics over the course of many years.

\section*{Acknowledgements}

The authors would like to thank Robert MacPherson for his support and mentorship over the years. 
The authors would also like to thank an anonymous referee for their thorough reading and their corrections to mistakes found in earlier drafts of this paper.
The paper is much stronger as a result of their efforts.

\bibliographystyle{plain}

\end{document}